\def\LaTeX{\leavevmode L\raise.42ex
   \hbox{\kern-.3em\size{\sf@size}{0pt}\selectfont A}\kern-.15em\TeX}
\newcommand{\BibTeX}{{\rm B\kern-.05em{\sc
i\kern-.025emb}\kern-.08em\TeX}}
\newtheorem{theorem}{Theorem}[section]
\newtheorem{lemma}[theorem]{Lemma}
\theoremstyle{definition}
\newtheorem{definition}{Definition}
\newtheorem{remark}{Remark}
\numberwithin{equation}{section}
\begin{document}

\title[ Parseval frames and Weyl's law on sub-Riemannian manifolds]{Parseval space-frequency localized frames  on  sub-Riemannian compact homogeneous manifolds}

\author{Isaac Pesenson}

\maketitle

\begin{abstract}The objective of this chapter is to describe a  construction of Parseval bandlimited and localized frames on sub-Riemannian compact homogeneous manifolds.

\end{abstract}

\section{Introduction}\label{intro}
		
		The objective of this chapter is to describe a  construction of Parseval bandlimited and localized frames in $L_{2}$-spaces on a class of sub-Riemannian compact homogeneous manifolds. 
		
		The chapter begins with a brief review in section 2 of some results obtained in \cite{GP} where  a construction of Parseval bandlimited and localized  frames was performed  in $L_{2}({\bf M}),\>\>\>{\bf M}$ being a compact homogeneous manifold equipped with a natural Riemannian metric.

		In section \ref{2-sphere} we are using a sub-Riemannian structure on the  two-dimensional standard unit sphere $\mathbf{S}^{2}$ to explain the main differences between Riemannian and sub-Riemannian settings.  Each of these structures is associated with a distinguished second-order differential operator which arises from a metric.  These operators are  self-adjoint with respect to the usual normalized invariant (with respect to rotations) measure on $\mathbf{S}^{2}$.
		  The major difference between these operators is that  in the case of Riemannian metric the  operator is elliptic (the Laplace-Beltrami operator $\mathbf{L}$) and in the sub-Riemannian case it is not (the sub-Laplacian $\mathcal{L}$).  As a result, the  corresponding Sobolev spaces which are introduced as domains of powers of these operators are quite different. In the elliptic case one obtains the regular Sobolev spaces and in sub-elliptic one obtains  function spaces (sub-elliptic Sobolev spaces) in which functions have variable smoothness (compared to regular (elliptic) Sobolev smoothness).

	In section \ref{sub-Riem} we describe a class of sub-Riemannian structures on  compact homogeneous manifolds and consider a construction of  Parseval bandlimited and localized  frames associated with such structures. Leaving a detailed description of sub-Riemannian structures for later sections we will formulate our main result now.

		We consider compact homogeneous manifolds ${\bf M}$ equipped with  the so-called sub-Riemannian metric $\mu(x,y),\>x,y\in {\bf M}$ (see Definition \ref{metric}). 
To formulate our main result  we need a definition of a sub-Riemannian lattice on a manifold ${\bf M}$. The precise definitions of all the notions  used below will be given in the text.

\begin{lemma}\label{cover-0}
Let $ {\bf M}$ be a compact sub-Riemannian manifold and $\mu(x, y), \>x,y\in {\bf M}$  be a sub-Riemannian metric. Let  $B^{\mu}(x, r)$ be a ball in this metric with center $x\in {\bf M}$ and radius $r$.  There exists a natural number $N^{\mu}_{{\bf M}}$ such that for any sufficiently small $r>0$ 
there exists a set of points 
$\mathcal{M}_{r}^{\mu}=\{x_{i}\}$ with the following properties
\begin{enumerate}
\item  \textit {the balls $B^{\mu}(x_{i},  r/4)$ are disjoint,}
\item  \textit {the balls $B^{\mu}(x_{i}, r/2)$ form a cover of ${\bf M}$,}
\item  \textit {every point of ${\bf M}$ is covered by not more than  $N^{\mu}_{{\bf M}}$ balls $B^{\mu}(x_{i},r)$.}
\end{enumerate}
\end{lemma}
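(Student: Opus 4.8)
The plan is to realize $\mathcal{M}^{\mu}_{r}=\{x_{i}\}$ as a maximal $r/2$-separated subset of $\mathbf{M}$, read off properties (1) and (2) essentially for free, and derive (3) from a local volume-doubling property of sub-Riemannian balls.

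First I would restrict attention to $r>0$ so small that $B^{\mu}(x,r)\neq \mathbf{M}$ for every $x$; this is legitimate because, by the Chow--Rashevskii theorem, $\mu$ is a genuine metric inducing the manifold topology on the compact manifold $\mathbf{M}$. Call $S\subset \mathbf{M}$ \emph{$r/2$-separated} if $\mu(x,y)\ge r/2$ for all distinct $x,y\in S$; by compactness every such set is finite, and a greedy selection (or Zorn's lemma) produces a \emph{maximal} one, which we take for $\mathcal{M}^{\mu}_{r}$. Property (1) is then immediate from the triangle inequality: $\mu(x_{i},x_{j})\ge r/2$ forces $B^{\mu}(x_{i},r/4)\cap B^{\mu}(x_{j},r/4)=\emptyset$. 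Property (2) follows from maximality: if some $x$ had $\mu(x,x_{i})\ge r/2$ for all $i$, then $\{x\}\cup\mathcal{M}^{\mu}_{r}$ would be $r/2$-separated, a contradiction; hence every $x$ lies in some $B^{\mu}(x_{i},r/2)$.

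The real content is (3). Here I would invoke that a compact (homogeneous) sub-Riemannian manifold carries a \emph{uniform local doubling} estimate for the invariant measure: there exist $C_{0}\ge 1$ and $r_{0}>0$ with $\mathrm{vol}\,B^{\mu}(x,2\rho)\le C_{0}\,\mathrm{vol}\,B^{\mu}(x,\rho)$ for all $x\in \mathbf{M}$ and $0<\rho\le r_{0}$. This is the Ball--Box theorem together with the Nagel--Stein--Wainger volume estimates, with uniformity in the center supplied by homogeneity of $\mathbf{M}$ (or, failing that, by compactness). Granting this, fix $x\in \mathbf{M}$ and set $I=\{\,i:\ x\in B^{\mu}(x_{i},r)\,\}$, so that for $i\in I$ one has the inclusions $B^{\mu}(x_{i},r/4)\subset B^{\mu}(x,2r)\subset B^{\mu}(x_{i},3r)$. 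Iterating doubling a fixed number $k$ of times (with $2^{k}\ge 12$, valid once $r$ is small enough that $4r\le r_{0}$) gives, for each $i\in I$, the bound $\mathrm{vol}\,B^{\mu}(x,2r)\le \mathrm{vol}\,B^{\mu}(x_{i},3r)\le C_{0}^{\,k}\,\mathrm{vol}\,B^{\mu}(x_{i},r/4)$. Since the balls $\{B^{\mu}(x_{i},r/4)\}_{i\in I}$ are pairwise disjoint and all contained in $B^{\mu}(x,2r)$,
\[
\mathrm{vol}\,B^{\mu}(x,2r)\;\ge\;\sum_{i\in I}\mathrm{vol}\,B^{\mu}(x_{i},r/4)\;\ge\;\#I\cdot C_{0}^{-k}\,\mathrm{vol}\,B^{\mu}(x,2r),
\]
so $\#I\le C_{0}^{\,k}=:N^{\mu}_{\mathbf{M}}$, a bound independent of $x$ and of $r$, which is exactly (3).

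I expect the one genuinely delicate point to be this uniform local doubling property — the only place where sub-ellipticity (as opposed to the elliptic/Riemannian case) makes itself felt, since small sub-Riemannian balls are strongly anisotropic ``boxes'' whose dimensions are governed by the growth vector of the horizontal distribution. Everything else is the standard maximal-packing argument. One should also keep track of the smallness constraints on $r$ (namely $4r\le r_{0}$ and $B^{\mu}(x,r)\neq \mathbf{M}$), which together specify what ``sufficiently small'' means in the statement.
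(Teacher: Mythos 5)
Your argument is correct. The paper states Lemma \ref{cover-0} without proof (as it does the Riemannian analogue, Lemma \ref{cover}), so there is no in-text argument to compare against; but your route --- a maximal $r/2$-separated set for properties (1) and (2), and the uniform local volume-doubling of Carleson--Carath\'eodory balls from Nagel--Stein--Wainger (which the paper already cites for the metric $\mu$ and its ball volumes) combined with the disjointness of the $B^{\mu}(x_{i},r/4)$ inside $B^{\mu}(x,2r)$ for property (3) --- is the standard proof and surely the intended one. You also correctly isolate the one genuinely sub-Riemannian ingredient (uniform doubling for small anisotropic balls) and the smallness constraints on $r$ that make the statement's ``sufficiently small'' precise.
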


\begin{definition}
A set $\mathcal{M}_{r}^{\mu}=\{x_{i}\}$ constructed in the previous lemma will be called a metric $r$-lattice.
\end{definition}

The meaning of this definition is that points $\{x_{i}\}$ are distributed over ${\bf M}$  "almost uniformly" in the sense of the metric $\mu$.

We will consider compact homogeneous manifolds ${\bf M}={\bf G}/{\bf H}$ where ${\bf G} $ is a compact Lie group and ${\bf H}\subset {\bf G}$ is a closed subgroup.  Let $dx$ be an invariant (with respect to natural action of ${\bf G}$ on ${\bf M}$) measure on ${\bf M}$ and $L_{2}({\bf M})=L_{2}({\bf M}, dx)$ the corresponding Hilbert space of complex-valued functions on ${\bf M}$ with the inner product
$$
\left<f,g\right>=\int _{{\bf M}}f\overline{g}dx.
$$
The notation $\left|B^{\mu}(x, r)\right|$ will be used for the volume of the ball with respect to the measure $dx$. An interesting feature of sub-Riemann structures is that balls of the same radius may have essentially different volumes (in contrast to the case of the Riemann metric and Riemann measure). 

In the next Theorem 
we will mention  a sub-elliptic operator (sub-Laplacian) $\mathcal{L}$ (see the precise definition in (\ref{sub-L}))
which is hypoelliptic \cite{Hor},  self-adjoint and non-negative in $L_{2}(\bf {M})$. This operator is a natural analog of a Laplace-Beltrami operator in the case of a Riemannian  manifold.

	\begin{theorem}\label{main-1} We assume that ${\bf M}$ is a compact homogeneous manifold equipped with a sub-Riemann metric $\mu$ (see section \ref{sub-Riem}). Set $r_{j}=2^{-j-1},\>\>j=0, 1, 2, ...,$ and let $\mathcal{M}_{r_{j}}^{\mu}=\{x^{j}_{k}\}_{k=1}^{m_{j}}, \>\>\>x^{j}_{k} \in {\bf M},\>\>\>j=0,1,2,..$ be a sequence of metric lattices.

With every point $x_{k}^{j}$ one can associate a function $\Theta_{k}^{j}$ such that: 

\begin{enumerate}

\item  every $ \Theta^{j}_{k}$ is bandlimited in the sense that $ \Theta^{j}_{k}$ belongs to the space $\mathbf{E}_{[2^{2j-2}, 2^{2j+2}]}(\mathcal{L})$ which is the span of all eigenfunctions of $\mathcal{L}$ whose corresponding eigenvalues belong to the interval $[2^{2j-2}, 2^{2j+2})$,

\item every $ \Theta^{j}_{k}$ is essentially supported around $x_{k}^{j}$ in the sense that for any $N>0$ there exists a constant $C(N)>0$ such that for all $j,k$ one has
\begin{equation}\label{localiz}
 \left| \Theta^{j}_{k}(y)\right|\leq
    C( N)\left|B^{\mu}\left(x^{j}_{k}, 2^{-j}\right)\right|^{-1/2}\left(1+2^{j}\mu(x^{j}_{k},y)\right)^{-N},
\end{equation}

\item $\>\>\{\Theta^{j}_{k}\}\>\>$  is a Parseval frame i.e. for all $f\in L_{2}({\bf M})$
\begin{equation}\label{ tight}
 \sum_{j\geq 0}\>\> \sum_{1\leq k\leq m_{j}} \left|\left<f, \Theta^{j}_{k}\right>\right|^{2}= \|f\|^{2}_{L_{2}({\bf M})},
\end{equation}
 and as a  consequence of the Parseval property one has the following reconstruction formula
	\item  
	$$
	f= \sum_{j\geq 0}\>\> \sum_{1\leq k\leq m_{j}} \left<f, \Theta^{j}_{k}\right>\Theta^{j}_{k}.
$$
\end{enumerate}
\end{theorem}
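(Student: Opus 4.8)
The plan is to build the frame from a spectral (Littlewood–Paley type) decomposition of the identity adapted to $\mathcal{L}$, then discretize each band using a cubature formula supported on the metric lattice, following the scheme of \cite{GP} but with the Riemannian Laplace–Beltrami operator replaced by the sub-Laplacian $\mathcal{L}$. First I would fix a smooth partition of unity on the spectral side: choose $g\in C^{\infty}(\mathbb{R}_{+})$ with $\operatorname{supp} g\subset[1/4,4]$, $0\le g\le 1$, and $\sum_{j\ge 0} g^{2}(2^{-2j}\lambda)=1$ for all $\lambda\ge 0$ after incorporating the lowest band, so that the operators $g(2^{-2j}\mathcal{L})$ (defined by the spectral theorem, since $\mathcal{L}$ is self-adjoint and non-negative) satisfy $\sum_{j\ge 0} g(2^{-2j}\mathcal{L})^{2}=\mathrm{Id}$ on $L_{2}({\bf M})$. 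Each $g(2^{-2j}\mathcal{L})$ maps $L_{2}({\bf M})$ into $\mathbf{E}_{[2^{2j-2},2^{2j+2}]}(\mathcal{L})$, which gives band-limitation, property (1).

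The second step is the kernel estimate that yields the localization (\ref{localiz}). Here I would invoke finite-propagation-speed / functional-calculus estimates for the sub-Laplacian: because $\mathcal{L}$ is a hypoelliptic sum-of-squares operator generating a semigroup with Gaussian-type bounds in the Carnot–Carathéodory metric, the Schwartz function $g$ of $2^{-2j}\mathcal{L}$ has an integral kernel $K_{j}(x,y)$ obeying, for every $N$,
\begin{equation*}
|K_{j}(x,y)|\le C(N)\,|B^{\mu}(x,2^{-j})|^{-1}\bigl(1+2^{j}\mu(x,y)\bigr)^{-N}.
\end{equation*}
This is the crux: one needs the off-diagonal decay in the sub-Riemannian metric together with the volume normalization by $|B^{\mu}(x,2^{-j})|$, and it is where the non-ellipticity of $\mathcal{L}$ and the fact that balls of equal radius can have very different volumes must be handled carefully — on a compact homogeneous space this can be done using the homogeneity (the heat kernel and its derivatives are controlled uniformly by translating to a base point) combined with the doubling property of $\mu$. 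I expect this kernel bound to be the main obstacle, and I would isolate it as a separate lemma.

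The third step is discretization. On the band $\mathbf{E}_{[2^{2j-2},2^{2j+2}]}(\mathcal{L})$ I would establish a cubature formula with nodes $\mathcal{M}^{\mu}_{r_{j}}=\{x^{j}_{k}\}$ (the $r_{j}$-lattice from Lemma \ref{cover-0}) and positive weights $\mu^{j}_{k}\asymp|B^{\mu}(x^{j}_{k},2^{-j})|$, exact on products of two band-limited functions in a slightly enlarged band; this uses the Plancherel–Polya / Bernstein-type inequality for $\mathcal{L}$, the covering properties (1)–(3) of the lattice, and the fact that $r_{j}=2^{-j-1}$ is matched to the bandwidth $2^{2j}$. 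Granting such cubature, set $\Theta^{j}_{k}=\sqrt{\mu^{j}_{k}}\,K_{j}(x^{j}_{k},\cdot)$. Then for $f\in L_{2}({\bf M})$, writing $f=\sum_{j} g(2^{-2j}\mathcal{L})^{2}f$ and expanding $\|g(2^{-2j}\mathcal{L})f\|^{2}$ via the cubature (applied to the band-limited function $|g(2^{-2j}\mathcal{L})f|^{2}$, which the reproducing kernel identity $g(2^{-2j}\mathcal{L})f(x)=\langle g(2^{-2j}\mathcal{L})f,K_{j}(x,\cdot)\rangle$ converts into a sum of $|\langle f,\Theta^{j}_{k}\rangle|^{2}$) gives
\begin{equation*}
\|f\|^{2}=\sum_{j\ge 0}\|g(2^{-2j}\mathcal{L})f\|^{2}=\sum_{j\ge 0}\sum_{k=1}^{m_{j}}|\langle f,\Theta^{j}_{k}\rangle|^{2},
\end{equation*}
which is (3); the band-limitation of $\Theta^{j}_{k}$ is inherited from $K_{j}$, and (\ref{localiz}) follows from the kernel bound with $|B^{\mu}(x^{j}_{k},2^{-j})|^{-1}\cdot|B^{\mu}(x^{j}_{k},2^{-j})|^{1/2}=|B^{\mu}(x^{j}_{k},2^{-j})|^{-1/2}$. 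Finally, (4) is the standard fact that a Parseval frame satisfies $f=\sum \langle f,\Theta^{j}_{k}\rangle\Theta^{j}_{k}$ with convergence in $L_{2}({\bf M})$, obtained by polarizing (3). The two places demanding genuine care in the sub-Riemannian setting are the kernel estimate (Step 2) and the uniformity of the cubature weights in terms of the variable ball volumes (Step 3); the rest is a transcription of the Riemannian argument recalled in section 2.
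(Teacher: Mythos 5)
Your proposal follows essentially the same route as the paper's proof: the discrete Littlewood--Paley decomposition $\sum_{j\ge 0}F_j^2(\mathcal{L})=\mathrm{Id}$, the Coulhon--Kerkyacharian--Petrushev-type kernel localization in the metric $\mu$ with the $|B^{\mu}(x,2^{-j})|$ volume normalization (which the paper imports from the Dirichlet-space framework rather than reproving), and discretization of each band by a positive cubature formula on the lattice, setting $\Theta^{j}_{k}=\sqrt{\alpha_{j,k}}\,\overline{\mathcal{K}^{F}_{2^{-j}}}(x^{j}_{k},\cdot)$. The one quantitative point you underestimate is the band enlargement under multiplication: in the sub-Riemannian setting Theorem \ref{product} only gives $fg\in\mathbf{E}_{C_{0}\omega^{Q}}(\mathcal{L})$ for $f,g\in\mathbf{E}_{\omega}(\mathcal{L})$, so the cubature nodes for the $j$-th band must come from a lattice of spacing comparable to $2^{-Q(j+1)}$ (as in the paper's (\ref{rate})) rather than being ``matched to the bandwidth $2^{2j}$'' as in the Riemannian case --- the enlarged band is not slight, and this is precisely where the step count $Q$ of the bracket-generating family enters the construction.
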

	In Theorem \ref{beshom}  this frame is used to obtain characterization of sub-elliptic Besov spaces in terms of the frame coefficients.

\section{Parseval localized frames on Riemannian compact homogeneous manifolds}\label{ParsHM}

\subsection{Hilbert frames} Frames in Hilbert spaces were introduced in \cite{DS}.
\begin{definition}
A set of vectors $\{\psi_{v}\}$  in a Hilbert space $\mathcal{H}$ is called a frame if there exist constants $A, B>0$ such that for all $f\in \mathcal{H}$ 
\begin{equation}
A\|f\|^{2}_{2}\leq \sum_{v}\left|\left<f,\psi_{v}\right>\right|^{2}     \leq B\|f\|_{2}^{2}.
\end{equation}
The largest $A$ and smallest $B$ are called lower and upper frame bounds.  
\end{definition}

The set of scalars $\{\left<f,\psi_{v}\right>\}$ represents a set of measurements of a signal $f$. To synthesize the signal $f$ from this set of measurements one has to find another (dual) frame $\{\Psi_{v}\}$ and then a reconstruction formula is 
\begin{equation}\label{Hfr}
f=\sum_{v}\left<f,\psi_{v}\right>\Psi_{v}.
\end{equation}
Dual frames  are  not unique in general.  Moreover it is difficult   to find a dual frame.
However, for frames with $A=B=1$ the decomposition and synthesis of functions can be done with the same frame. In other words
 \begin{equation}
f=\sum_{v}\left<f,\psi_{v}\right>\psi_{v}.
\end{equation}
Such frames are known as Parseval frames.
For example,
 three vectors in $\mathbf{R}^{2}$ with angles  $2\pi/3$ between them whose lengths are all  $\sqrt{2/3}$ form a Parseval frame.

\subsection{ Compact homogeneous manifolds}

The basic information about compact homogeneous manifolds can be found in \cite{H1}, \cite{H2}.
A homogeneous compact manifold $\mathbf{{\bf M}}$ is a
$C^{\infty}$-compact manifold on which a compact
Lie group $\mathbf{G}$ acts transitively. In this case $\mathbf{{\bf M}}$ is necessarily of the form $\mathbf{G}/\mathbf{H}$,
where $\mathbf{H}$ is a closed subgroup of $\mathbf{G}$. The notation $L_{2}(\mathbf{{\bf M}}),$ is used for the usual Hilbert spaces, where $dx$ is the normalized  invariant measure on ${\bf M}$.

The best known example of such manifold is a unit sphere $\mathbf{S}^{n}$ in $\mathbf{R}^{n+1}$:  $\mathbf{S}^{n}=\mathbf{SO}(n+1)/\mathbf{SO}(n)=\mathbf{G}/\mathbf{H}.$

If $\mathbf{g}$ is the Lie algebra of a compact Lie group $\mathbf{G}$ then there exists a such choice of  basis
$X_{1},...,X_{d}$ in
$\mathbf{g}$,  for which  the operator
\begin{equation}\label{fulsumsquares}
-\mathbf{L}=X_{1}^{2}+X_{2}^{2}+\    ... +X_{d}^{2},    \ d=dim\ \mathbf{G}
\end{equation}
is a bi-invariant operator on $\mathbf{G}$.  Here $X_{j}^{2}$ is $X_{j}\circ X_{j}$ where we identify each $X_{j}$ with a left-invariant vector field on ${\bf G}$. We will use the same notation for its image under differential of the quasi-regular representation of $\mathbf{G}$ in $L_{2}({\bf M})$. This operator
$\mathbf{L}$, which is known as  the Casimir  operator is elliptic.
There are 
situations in which the operator $\mathbf{L}$ is, or is proportional to, the
Laplace-Beltrami operator  of an invariant metric on ${\bf M}$. This
happens for example, if ${\bf M}$ is a $n$-dimensional torus, a compact semi-simple
Lie group, or a compact symmetric space of rank one.

 Since ${\bf M}$ is compact and the operator $\mathbf{L}$   is elliptic it has a discrete spectrum $0=\lambda_{0}<\lambda_{1}\leq \lambda_{2}\leq......$ which goes to infinity without any accumulation points  and there exists a complete  family  $\{u_{j}\}$  of orthonormal eigenfunctions which form a  basis in $L_{2}({\bf M})$.

 The elliptic differential self-adjoint (in $L_{2}(\mathbf{M})$) operator $\mathbf{L}$ and its powers $\mathbf{L}^{s/2}, \  k\in {\bf R_{+}},$ can be extended from $C^{\infty}(\mathbf{M})$ to distributions.  The family  of Sobolev spaces $W_{p}^{s}({\bf M}),\ 1\leq p< \infty,\  s\in {\bf R},$ can be introduced as subspaces of $L_{p}({\bf M})$ with the norm 
\begin{equation}\label{BesselNorm}
\|f\|_{p}+\|\mathbf{L}^{s/2}f\|_{p}.
\end{equation}

One can show that when $s=k$ is a natural number    this norm
 is equivalent to the norm
\begin{equation}
|||f|||_{k,p}=\|f\|_{p}+\sum_{1\leq i_{1},..., i_{k}\leq
d}\|X_{i_{1}}...X_{i_{k}}f\|_{p},\ 1\leq p<\infty.
\end{equation}

We assume now that ${\bf M}$ is equipped with a ${\bf G}$-invariant Riemann metric $\rho$. The Sobolev spaces can also be  introduced in terms of local charts \cite{T}. 
We fix a  finite cover $\left \{B^{\rho}(y_{\nu}, r_{0})\right\}$ of ${\bf M}$ 
\begin{equation}
{\bf M}=\bigcup_{\nu} B^{\rho}(y_{\nu}, r_{0}),\label{cover}
\end{equation}
where $B^{\rho}(y_{\nu}, r_{0})$ is a  ball centered at $y_{\nu}\in {\bf M}$ of radius
$r_{0}$ contained in a coordinate chart.  Let consider $\Psi=\{\psi_{\nu}\}$ be a partition of unity
$\Psi=\{\psi_{\nu}\}$ subordinate to this cover. The Sobolev
spaces $W^{k}_{p}({\bf M}), k\in \mathbf{N}, 1\leq p<\infty,$ are
introduced as the completion of $C^{\infty}({\bf M})$ with respect
to the norm
\begin{equation}
\|f\|_{W^{k}_{p}({\bf M})}=\left(\sum_{\nu}\|\psi_{\nu}f\|^{p}
_{W^{k}_{p}(B^{\rho}(y_{\nu}, r_{0}))}\right) ^{1/p}.\label{Sobnorm}
\end{equation}

\begin{remark}
Spaces $W_{p}^{k}({\bf M})$ are independent of the choice of elliptic self-ajoint second order differential operator. For every choice of such operators corresponding norms (\ref{BesselNorm}) will be equivalent. Also, any two  norms of the form (\ref{Sobnorm}) are equivalent \cite{T}.  

\end{remark}

The Besov spaces can be introduced via  the formula
\begin{equation}\label{ellipticBesov}
\mathcal{B}_{p, q}^{\alpha }({\bf M})
:= \left (L_{p}({\bf M}),W^{r}_{p}({\bf M})\right)^{K}_{\alpha/r,q},
\end{equation}
where $
0<\alpha<r\in {\bf N},\ 1\leq p< \infty,\>\>\>1\leq q\leq \infty.
$
Here $K$ is the Peetre's interpolation functor.

An explicit norm in these spaces was given in \cite{Pes79}-\cite{Pes90b}.
For the same operators as above $\{X_{1},...,X_{d}\},\ d=dim \ {\bf G}$,  let $T_{1},..., T_{d}$
be the corresponding one-parameter groups of translation along integral
curves of the corresponding vector  fields i.e.
 \begin{equation}
 T_{j}(\tau)f(x)=f(\exp \tau X_{j}\cdot x),
 x\in {\bf M}, \tau \in \mathbf{R}, f\in L_{2}({\bf M});
 \end{equation}
 here $\exp \tau X_{j}\cdot x$ is the integral curve of the vector field
 $X_{j}$ which passes through the point $x\in {\bf M}$.
 The modulus of continuity is introduced as
\begin{equation}
\Omega_{p}^{r}( s, f)= $$ $$\sum_{1\leq j_{1},...,j_{r}\leq
d}\sup_{0\leq\tau_{j_{1}}\leq s}...\sup_{0\leq\tau_{j_{r}}\leq
s}\|
\left(T_{j_{1}}(\tau_{j_{1}})-I\right)...\left(T_{j_{r}}(\tau_{j_{r}})-I\right)f\|_{L_{p}({\bf M})},\label{M}
\end{equation}
where $f\in L_{p}({\bf M}),\ r\in \mathbf{N},  $ and $I$ is the
identity operator in $L_{p}({\bf M}).$   We consider the space of all functions in $L_{p}({\bf M})$ for which the
following norm is finite:
\begin{equation}
\|f\|_{L_{p}({\bf M})}+\left(\int_{0}^{\infty}(s^{-\alpha}\Omega_{p}^{r}(s,
f))^{q} \frac{ds}{s}\right)^{1/q} , 1\leq p<\infty, 1\leq q\leq \infty,\label{BnormX}
\end{equation}
with the usual modifications for $q=\infty$.

 \begin{theorem}
 The norm of the Besov space $B_p^{\alpha q}({\bf M})=(L_{p}({\bf M}),W^{r}_{p}({\bf M}))^{K}_{\alpha/r,q},\ 
 0<\alpha<r\in \mathbf{N},\ 
1\leq p<\infty, 1\leq q\leq \infty,$ is equivalent to the norm (\ref{BnormX}). Moreover, the norm
(\ref{BnormX}) is equivalent to the norm
\begin{equation}
\|f\|_{W_{p}^{[\alpha]}({\bf M})}+\sum_{1\leq j_{1},...,j_{[\alpha] }\leq d}
\left(\int_{0}^{\infty}\left(s^{[\alpha]-\alpha}\Omega_{p}^{1}
(s,X_{j_{1}}...X_{j_{[\alpha]}}f)\right)^{q}\frac{ds}{s}\right)^{1/q}\label{nonint}
\end{equation}
if $\alpha$ is not integer ($[\alpha]$ is its integer part).  If
$\alpha=k\in \mathbf{N}$ is an integer then the norm
(\ref{BnormX}) is equivalent to the norm (Zygmund condition)
\begin{equation}
\|f\|_{W_{p}^{k-1}({\bf M})}+ \sum_{1\leq j_{1}, ... ,j_{k-1}\leq d }
\left(\int_{0}^{\infty}\left(s^{-1}\Omega_{p}^{2}(s,
X_{j_{1}}...X_{j_{k-1}}f)\right)
 ^{q}\frac{ds}{s}\right)^{1/q}.\label{integer}
\end{equation}
\end{theorem}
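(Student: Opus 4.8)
\medskip
\noindent\emph{Proof strategy.} The plan is to deduce everything from the interplay between the one–parameter groups $T_j(\tau)$ and the $K$-functional of the couple $\bigl(L_p({\bf M}),W^r_p({\bf M})\bigr)$. Three elementary facts are used throughout. First, since $dx$ is ${\bf G}$-invariant, each $T_j(\tau)$ is an isometry of $L_p({\bf M})$ for $1\le p<\infty$, the family $\{T_j(\tau)\}_{\tau\in\mathbf{R}}$ is a strongly continuous group whose generator agrees on $C^\infty({\bf M})$ with $X_j$, and $\bigcap_{|I|\le k}\mathcal{D}(X_I)$ coincides with $W^k_p({\bf M})$ with equivalent norms --- this is the equivalence $|||f|||_{k,p}\asymp\|f\|_{W^k_p}$ already recorded above. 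Second, since the $X_j$ are left–invariant on ${\bf G}$, on the level of the quasi–regular representation one has the intertwining identity $X_\ell\,T_j(\tau)=T_j(\tau)\sum_{m=1}^{d}a^{(j)}_{\ell m}(\tau)X_m$, where the $a^{(j)}_{\ell m}(\tau)$ are the matrix entries of $\exp(\tau\,\mathrm{ad}\,X_j)$ on $\mathbf{g}$, hence real–analytic, with $a^{(j)}_{\ell m}(0)=\delta_{\ell m}$ and $|a^{(j)}_{\ell m}(\tau)|\le C$ for $|\tau|\le s_0$, where $s_0$ is a fixed positive number. Third, all a priori estimates below are first proved for $f\in C^\infty({\bf M})$ and then extended by density.

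\medskip
\noindent\emph{Step 1: the order-$r$ modulus is comparable to the $K$-functional.} On $C^\infty({\bf M})$ one repeatedly inserts $T_j(\tau)-I=\int_0^\tau T_j(\sigma)X_j\,d\sigma$, each time carrying the freed generator $X_j$ to the right past the remaining flows by the intertwining identity; boundedness of the coefficients $a^{(j)}_{\ell m}$ on $[0,s_0]$ then gives, for $0<s\le s_0$,
\[
\Omega^r_p(s,h)\le C\,s^{r}\sum_{1\le i_1,\dots,i_r\le d}\|X_{i_1}\cdots X_{i_r}h\|_{L_p}\le C\,s^{r}\|h\|_{W^r_p},
\]
while trivially $\Omega^r_p(s,g)\le C\|g\|_{L_p}$. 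Splitting an arbitrary $f=g+h$ and taking the infimum yields $\Omega^r_p(s,f)\le C\,K\bigl(\min(s^{r},1),f;L_p,W^r_p\bigr)$. For the reverse inequality I would use an $r$-th order Steklov–type regularizer $L_s$, obtained as a finite polynomial in the averaging operators $A_{j,s}=\frac{1}{s}\int_0^sT_j(\tau)\,d\tau$, $j=1,\dots,d$, arranged so that $I-L_s$ is a linear combination of $r$-fold mixed differences $(T_{j_1}(\cdot)-I)\cdots(T_{j_r}(\cdot)-I)$; then $\|f-L_sf\|_{L_p}\le C\,\Omega^r_p(s,f)$ is immediate, and integrating by parts $r$ times (again commuting generators past the flows via the intertwining identity) gives $s^{r}\|L_sf\|_{W^r_p}\le C\,\Omega^r_p(s,f)+C\,s^{r}\|f\|_{L_p}$, the last term being the residue of the lower–order $\mathrm{ad}\,X_j$-corrections. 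Hence $K\bigl(s^{r},f;L_p,W^r_p\bigr)\le C\bigl(\Omega^r_p(s,f)+s^{r}\|f\|_{L_p}\bigr)$ for $0<s\le s_0$. Combining, $K\bigl(s^{r},f;L_p,W^r_p\bigr)\asymp\Omega^r_p(s,f)+s^{r}\|f\|_{L_p}$ on $(0,s_0]$, and both sides are $\asymp\|f\|_{L_p}$ for $s\ge s_0$; substituting $t=s^{r}$ in the $K$-functional expression for $(L_p,W^r_p)^{K}_{\alpha/r,q}$ and noting that the contributions $\int_0^{s_0}\bigl(s^{-\alpha}s^{r}\|f\|_{L_p}\bigr)^{q}\frac{ds}{s}$ (finite because $r>\alpha$) and $\int_{s_0}^{\infty}\bigl(s^{-\alpha}\|f\|_{L_p}\bigr)^{q}\frac{ds}{s}$ (finite because $\alpha>0$) are both absorbed into the term $\|f\|_{L_p}$ of (\ref{BnormX}), we obtain the equivalence of the interpolation norm with (\ref{BnormX}). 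Independence of the particular $r>\alpha$ then follows from the Marchaud inequality $\Omega^{r'}_p(t,f)\le C\,t^{r'}\bigl(\int_t^{s_0}s^{-r'}\Omega^r_p(s,f)\frac{ds}{s}+\|f\|_{L_p}\bigr)$, $r'<r$ (proved here by the same integration–by–parts technique), together with its trivial converse $\Omega^r_p(s,f)\le C\,\Omega^{r'}_p(s,f)$.

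\medskip
\noindent\emph{Step 2: the explicit norms (\ref{nonint}) and (\ref{integer}).} Both equivalences follow by applying the low–order cases of Step 1 to the derivatives of $f$, together with the boundedness of $X_I\colon\mathcal{B}^{s}_{p,q}\to\mathcal{B}^{s-|I|}_{p,q}$ and the embedding $\mathcal{B}^{s}_{p,q}\hookrightarrow W^{[s]}_p$ --- both of which are themselves obtained from the modulus characterization by commuting $X_I$ past mixed differences with the intertwining identity. For $\alpha\notin\mathbf{N}$ set $m=[\alpha]$ and $\beta=\alpha-m\in(0,1)$; Step 1 with $r=1$ shows that $\bigl(\int_0^{\infty}(s^{-\beta}\Omega^1_p(s,X_If))^{q}\frac{ds}{s}\bigr)^{1/q}$ is an equivalent seminorm of $\mathcal{B}^{\beta}_{p,q}$, so the norm (\ref{nonint}) is equivalent to $\|f\|_{W^m_p}+\sum_{|I|=m}\|X_If\|_{\mathcal{B}^{\beta}_{p,q}}$. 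Domination of (\ref{nonint}) by (\ref{BnormX}) then follows from the boundedness of $X_I$ and the embedding above, while for the opposite inequality one peels $m$ of the $m+1$ differences occurring in $\Omega^{m+1}_p(s,f)$ into iterated integrals of generators, carries those $m$ generators to the right past the remaining single difference by the intertwining identity --- which produces $(T_{j_{m+1}}(\tau)-I)X_If$ plus correction terms of size $O(\tau)\|f\|_{W^m_p}$ --- and integrates, obtaining $\Omega^{m+1}_p(s,f)\le C\,s^{m}\sum_{|I|=m}\Omega^1_p(s,X_If)+C\,s^{m+1}\|f\|_{W^m_p}$ for $0<s\le s_0$; inserted into (\ref{BnormX}) and with the integral split at $s_0$ this gives the bound (the $s^{m+1}\|f\|_{W^m_p}$ term is integrable near $0$ since $1-\beta>0$, and the range $s\ge s_0$ is controlled by $\|f\|_{L_p}$). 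The integer case $\alpha=k$ is identical with $m$ replaced by $k-1$ and the first–order modulus replaced by the second–order one: Step 1 with $\alpha=1$, $r=2$ shows that $\bigl(\int_0^{\infty}(s^{-1}\Omega^2_p(s,g))^{q}\frac{ds}{s}\bigr)^{1/q}$ is an equivalent seminorm of the Zygmund space $\mathcal{B}^{1}_{p,q}$, whence (\ref{integer}) is equivalent to $\|f\|_{W^{k-1}_p}+\sum_{|I|=k-1}\|X_If\|_{\mathcal{B}^{1}_{p,q}}$, and the same peeling argument with one extra difference closes the proof; the second–order modulus is genuinely required, since $\int_0^{\infty}(s^{-1}\Omega^1_p(s,g))^{q}\frac{ds}{s}$ describes a strictly smaller space.

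\medskip
\noindent\emph{Expected main difficulty.} The delicate point is the Jackson–type half of Step 1 --- the construction of the regularizer $L_s$ and, above all, the estimate for $\|L_sf\|_{W^r_p}$. Over $\mathbf{R}^n$ this is classical and leaves no zeroth–order residue; on ${\bf M}$, however, every commutation of a generator past a flow feeds in $\mathrm{ad}\,X_j$-corrections, and controlling the resulting lower–order terms here, as well as the analogous corrections in the Marchaud–type estimate of Step 2, is precisely what forces the modulus (\ref{M}) and the Sobolev norm (\ref{Sobnorm}) to involve the full set of iterated products $X_{i_1}\cdots X_{i_r}$. Organizing these corrections cleanly --- carried out in detail in \cite{Pes79} --- is the technical heart of the argument.
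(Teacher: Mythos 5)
The paper does not actually prove this theorem: it is quoted as a known result, with the proof deferred to the references \cite{Pes79}--\cite{Pes90b}. Your outline reproduces the standard argument of those papers — the two-sided comparison $K(s^{r},f;L_{p},W^{r}_{p})\asymp \Omega^{r}_{p}(s,f)+s^{r}\|f\|_{L_{p}}$ via a Bernstein-type estimate on mixed differences and a Jackson-type estimate with Steklov-type means, with non-commutativity handled by the $\mathrm{Ad}(\exp\tau X_{j})$ intertwining, followed by Marchaud-type reduction of order to get (\ref{nonint}) and (\ref{integer}) — so the approach is the intended one. The only place your sketch is genuinely thin is the one you flag yourself: in several non-commuting directions the regularizer $L_{s}$ must be built as a composition of one-directional order-$r$ Steklov regularizers, and the estimate of $\|X_{i_{1}}\cdots X_{i_{r}}L_{s}f\|_{p}$ for mixed tuples requires distributing the derivatives among the factors and controlling the lower-order $\mathrm{ad}$-corrections; this bookkeeping is nontrivial but is exactly what is carried out in the cited works.
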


\begin{definition}
The space of $\omega$-bandlimited functions $\mathbf{E}_{\omega}(\mathbf{L})$ is defined as the span of all eigenfunctions of $\mathbf{L}$ whose eigenvalues are not greater than $\omega.$
\end{definition}

To describe our  construction of frames  we need the notion of a lattice on a manifold ${\bf M}$ equipped with a Riemann metric $\rho$. This notion is similar to the corresponding notion introduced in Lemma \ref{cover-0}.

\begin{lemma}\label{cover}
If ${\bf M}$ is a compact Riemannian manifold  then there exists a natural $N^{\rho}_{{\bf M}}$ such that for any
sufficiently small $r$  there exists a set of points 
$\mathcal{M}^{\rho}_{r}=\{x_{i}\}$ with the following properties
\begin{enumerate}
\item  \textit {the balls $B^{\rho}(x_{i},  r/4)$ are disjoint,}
\item  \textit {the balls $B^{\rho}(x_{i}, r/2)$ form a cover of ${\bf M}$,}
\item  \textit {the height of the cover by the balls $B^{\rho}(x_{i},r)$ is not
greater than $N^{\rho}_{{\bf M}}.$}
\end{enumerate}
\end{lemma}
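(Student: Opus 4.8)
The plan is to prove Lemma~\ref{cover} (the Riemannian lattice lemma) by a standard maximal/greedy packing argument, mirroring what one would do for Lemma~\ref{cover-0} in the sub-Riemannian case. First I would fix $r>0$ small enough that balls of radius $r$ are contained in normal coordinate charts, so that the Riemannian volume $\left|B^{\rho}(x,\varrho)\right|$ obeys two-sided bounds $c_{1}\varrho^{n}\leq \left|B^{\rho}(x,\varrho)\right|\leq c_{2}\varrho^{n}$ uniformly in $x\in{\bf M}$ and $0<\varrho\leq r$, with constants depending only on $({\bf M},\rho)$; this is where compactness is used, and it is the only geometric input needed. Then I would take $\mathcal{M}^{\rho}_{r}=\{x_{i}\}$ to be a maximal $r/2$-separated subset of ${\bf M}$, i.e.\ a set such that $\mu^{\rho}(x_{i},x_{j})\geq r/2$ for $i\neq j$ and no point can be added without violating this; such a set exists and is finite by compactness (the balls $B^{\rho}(x_{i},r/4)$ are pairwise disjoint and each has volume $\geq c_{1}(r/4)^{n}>0$, so there are at most $\left|{\bf M}\right|/(c_{1}(r/4)^{n})$ of them).

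The three properties then follow quickly. Property (1): if $i\neq j$ then $\mu^{\rho}(x_{i},x_{j})\geq r/2$, hence $B^{\rho}(x_{i},r/4)\cap B^{\rho}(x_{j},r/4)=\emptyset$ by the triangle inequality. Property (2): by maximality, every $y\in{\bf M}$ lies within distance $r/2$ of some $x_{i}$ (otherwise $y$ could be adjoined to the set), so $\{B^{\rho}(x_{i},r/2)\}$ covers ${\bf M}$. Property (3): fix $y\in{\bf M}$ and suppose $y\in B^{\rho}(x_{i},r)$ for $i$ ranging over some index set $I_{y}$. For each such $i$, $B^{\rho}(x_{i},r/4)\subset B^{\rho}(y,2r)$, and these balls are disjoint by (1); comparing volumes gives
\begin{equation*}
\#I_{y}\cdot c_{1}(r/4)^{n}\leq \sum_{i\in I_{y}}\left|B^{\rho}(x_{i},r/4)\right|=\left|\bigcup_{i\in I_{y}}B^{\rho}(x_{i},r/4)\right|\leq \left|B^{\rho}(y,2r)\right|\leq c_{2}(2r)^{n},
\end{equation*}
so $\#I_{y}\leq c_{2}(2r)^{n}/(c_{1}(r/4)^{n})=c_{2}\,8^{n}/c_{1}=:N^{\rho}_{{\bf M}}$, a constant independent of $r$ and $y$. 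This is exactly the bounded-overlap (finite height) assertion.

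The only step requiring care — and the main obstacle, such as it is — is the uniform volume doubling/comparison estimate $c_{1}\varrho^{n}\leq\left|B^{\rho}(x,\varrho)\right|\leq c_{2}\varrho^{n}$ for all small $\varrho$ and all centers $x$. On a compact Riemannian manifold this is classical: one covers ${\bf M}$ by finitely many normal coordinate charts, uses that the metric tensor is uniformly comparable to the Euclidean one on each chart, and takes $r$ smaller than the injectivity radius; all constants can be chosen uniformly by compactness. Everything else is the textbook Vitali-type packing argument, and the passage from the Riemannian to the sub-Riemannian version (Lemma~\ref{cover-0}) is identical once one has the analogous—though now non-uniform in the sense that balls of equal radius can have very different volumes—local volume estimates for the Carnot--Carath\'eodory metric $\mu$; in that case the exponent $n$ is replaced by a local homogeneous dimension, but the disjointness-plus-volume-comparison bookkeeping that yields $N^{\mu}_{{\bf M}}$ goes through verbatim.
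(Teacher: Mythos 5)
Your argument is correct and complete: taking a maximal $r/2$-separated set and combining the triangle inequality with the uniform two-sided volume comparison $c_{1}\varrho^{n}\leq\left|B^{\rho}(x,\varrho)\right|\leq c_{2}\varrho^{n}$ (valid for all small $\varrho$, including $2r$, by compactness) gives exactly the disjointness, covering, and bounded-height properties. The paper states this lemma without proof (it is imported from earlier work such as \cite{Pes00}), and your Vitali-type packing argument is precisely the standard one used there, so there is nothing to add.
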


The meaning of this definition is that points $\{x_{k}\}$ distributed over ${\bf M}$  almost uniformly.

In \cite{GP}  the following theorem was proved for compact homogeneous manifolds considered with invariant Riemann metric.
		
	\begin{theorem}\label{main 10} Set $r_{j}=2^{-j-1},\>\>j=0, 1, 2, ...,$ and let $\mathcal{M}_{r_{j}}^{\rho}=\{x^{j}_{k}\}_{k=1}^{m_{j}}, \>\>\>x^{j}_{k} \in {\bf M},\>\>\>j=0,1,2,..$ be a sequence of metric lattices.

With every point $x_{k}^{j}$ we associate a function $\Psi_{k}^{j}$ such that: 

\begin{enumerate}

\item  every $\Psi_{k}^{j}$ is bandlimited in the sense that $ \Psi^{j}_{k}$ belongs to the space $\mathbf{E}_{[2^{2j-2}, 2^{2j+2}]}(\mathbf{L})$ which is the span of all eigenfunction of $\mathbf{L}$ whose corresponding eigenvalues belong to the interval $[2^{2j-2}, 2^{2j+2})$,

\item every $\Psi_{k}^{j}$ is essentially supported around $x_{k}^{j}$ in the sense that the following estimate holds for every $N >n$:
 \begin{equation}\label{sp-loc}
 \left| \Psi^{j}_{k}(y)\right|\leq
    C( N)
    2^{jn}\left(1+ 2^{j}\rho\ (x_{k}^{j}, y)\right)^{-N},\>\>\> dim \ {\bf M} = n,
    \end{equation}

\item $\>\>\{\Psi^{j}_{k}\}\>\>$  is a Parseval frame i.e. for all $f\in L_{2}({\bf M})$
\begin{equation}\label{ tight}
 \sum_{j\geq 0}\>\> \sum_{1\leq k\leq m_{j}} \left|\left<f, \Psi^{j}_{k}\right>\right|^{2}= \|f\|^{2}_{L_{2}({\bf M})},
 \end{equation}
 and
 \begin{equation}
 f= \sum_{j\geq 0}\>\> \sum_{1\leq k\leq m_{j}} \left<f, \Psi^{j}_{k}\right>\Psi^{j}_{k}.
 \end{equation}

\end{enumerate}

\end{theorem}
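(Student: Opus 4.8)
The plan is to build the frame out of a smooth dyadic (Littlewood--Paley) decomposition of the spectrum of $\mathbf{L}$ together with positive-weight cubature formulas adapted to the lattices $\mathcal{M}_{r_{j}}^{\rho}$. First I would fix a smooth function $F$ supported in $[2^{-2},2^{2}]$ and a smooth $F_{0}$ supported in $[0,2^{2})$ so that, setting $F_{j}(\lambda):=F(2^{-2j}\lambda)$ for $j\ge1$, one has $\sum_{j\ge0}F_{j}^{2}(\lambda)=1$ for every $\lambda\ge0$; such a resolution of the identity exists by a routine construction. By the spectral theorem for the self-adjoint operator $\mathbf{L}$, each $F_{j}(\mathbf{L})$ is an integral operator with kernel
\[
\mathcal{K}_{j}(x,y)=\sum_{\lambda_{\ell}\in[2^{2j-2},\,2^{2j+2})}F_{j}(\lambda_{\ell})\,u_{\ell}(x)\,\overline{u_{\ell}(y)},
\]
and $\sum_{j\ge0}\|F_{j}(\mathbf{L})f\|_{2}^{2}=\|f\|_{2}^{2}$ for every $f\in L_{2}({\bf M})$. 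Next, for each $j$ I would invoke the positive-weight cubature formula attached to the metric $r_{j}$-lattice $\{x_{k}^{j}\}$: there exist weights $w_{k}^{j}>0$ with $w_{k}^{j}\asymp|B^{\rho}(x_{k}^{j},r_{j})|\asymp 2^{-jn}$ such that $\int_{{\bf M}}\varphi\,dx=\sum_{k}w_{k}^{j}\varphi(x_{k}^{j})$ for every $\varphi\in\mathbf{E}_{c\,2^{2j}}(\mathbf{L})$, where $c$ is a fixed constant chosen large enough for the step below; since the exactness bandwidth of an $r$-lattice cubature is of order $r^{-2}$, the mesh scales $r_{j}=2^{-j-1}$ and the dyadic bands fit together. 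Finally I set
\[
\Psi_{k}^{j}(x):=\sqrt{w_{k}^{j}}\;\mathcal{K}_{j}(x,x_{k}^{j}).
\]

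Property (1) is immediate from the displayed formula: as a function of its first argument $\mathcal{K}_{j}(\cdot,x_{k}^{j})$ is a finite linear combination of eigenfunctions with eigenvalues in $[2^{2j-2},2^{2j+2})$, so $\Psi_{k}^{j}\in\mathbf{E}_{[2^{2j-2},2^{2j+2}]}(\mathbf{L})$. For the Parseval identity in item (3), since $F_{j}$ is real and $\mathbf{L}$ is self-adjoint we have $\overline{\mathcal{K}_{j}(y,x)}=\mathcal{K}_{j}(x,y)$, whence
\[
\left\langle f,\Psi_{k}^{j}\right\rangle=\sqrt{w_{k}^{j}}\int_{{\bf M}}f(y)\,\mathcal{K}_{j}(x_{k}^{j},y)\,dy=\sqrt{w_{k}^{j}}\,\big(F_{j}(\mathbf{L})f\big)(x_{k}^{j}).
\]
Both $F_{j}(\mathbf{L})f$ and its complex conjugate lie in $\mathbf{E}_{2^{2j+2}}(\mathbf{L})$, and since a product of two bandlimited functions on a compact homogeneous manifold is again bandlimited, to a controlled multiple of the larger bandwidth, the function $|F_{j}(\mathbf{L})f|^{2}$ belongs to $\mathbf{E}_{c\,2^{2j}}(\mathbf{L})$ once $c$ is fixed accordingly; then the cubature formula gives $\sum_{k}w_{k}^{j}|F_{j}(\mathbf{L})f(x_{k}^{j})|^{2}=\int_{{\bf M}}|F_{j}(\mathbf{L})f|^{2}\,dx=\|F_{j}(\mathbf{L})f\|_{2}^{2}$, and summing over $j$ with $\sum_{j}F_{j}^{2}\equiv1$ yields $\sum_{j,k}|\langle f,\Psi_{k}^{j}\rangle|^{2}=\|f\|_{2}^{2}$. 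The reconstruction formula, the second half of item (3), is then the standard consequence of a Parseval ($A=B=1$) frame being its own dual, as in (\ref{Hfr}).

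It remains to prove the localization estimate of item (2), and this is the main analytic obstacle. The required input is the kernel bound
\[
|\mathcal{K}_{j}(x,y)|\le C_{N}\,2^{jn}\bigl(1+2^{j}\rho(x,y)\bigr)^{-N},\qquad j\ge0,\ N>0,\qquad n=\dim{\bf M},
\]
which I would derive from the smoothness and the compact spectral support of $F_{j}$: inserting the wave operator $\cos(t\sqrt{\mathbf{L}})$ through the Fourier inversion formula for the even function $\xi\mapsto F_{j}(\xi^{2})$, and using the finite propagation speed of $\cos(t\sqrt{\mathbf{L}})$ with respect to a distance equivalent to $\rho$ (equivalently, the bi-invariance of $\mathbf{L}$ on $\mathbf{G}$ and harmonic analysis on $\mathbf{G}/\mathbf{H}$), one converts the rapid off-support decay of that function's Fourier transform into decay in $\rho(x,y)$, while the factor $2^{jn}$ is the on-diagonal size, controlled by the local Weyl estimate $\sum_{\lambda_{\ell}\in[2^{2j-2},2^{2j+2})}|u_{\ell}(x)|^{2}\le C\,2^{jn}$ valid uniformly in $x$ on homogeneous manifolds; this is where $\dim{\bf M}$ and Weyl's law enter. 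Granting this bound, $|\Psi_{k}^{j}(y)|=\sqrt{w_{k}^{j}}\,|\mathcal{K}_{j}(y,x_{k}^{j})|\le C_{N}\sqrt{w_{k}^{j}}\,2^{jn}(1+2^{j}\rho(x_{k}^{j},y))^{-N}$, and since $\sqrt{w_{k}^{j}}\asymp 2^{-jn/2}\le1$ this gives (\ref{sp-loc}). I expect the two genuinely delicate points to be this kernel estimate and the verification that, with $r_{j}=2^{-j-1}$, the Littlewood--Paley profile and the mesh scales have been chosen so that the cubature remains exact on the band occupied by $|F_{j}(\mathbf{L})f|^{2}$; the remaining steps are bookkeeping with the spectral calculus and elementary Hilbert-frame theory.
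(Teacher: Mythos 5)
Your proposal follows essentially the same route as the paper: the paper defers Theorem \ref{main 10} to \cite{GP}, but its proof of the sub-Riemannian analogue (Theorem \ref{main-1}) uses exactly your scheme --- a smooth spectral partition $\sum_{j}F_{j}^{2}\equiv 1$, the product property of bandlimited functions, positive cubature on metric lattices, frame elements $\sqrt{\alpha_{j,k}}\,\overline{\mathcal{K}^{F}_{2^{-j}}}(x_{j,k},\cdot)$, and an off-diagonal kernel estimate for the localization. Your identification of the kernel bound (via finite propagation speed and the local Weyl estimate) and the matching of mesh scale to the bandwidth of $|F_{j}(\mathbf{L})f|^{2}$ as the two delicate points is exactly where the cited references do the real work.
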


 As an important application of Theorem \ref{main 10}  one can describe Besov spaces in therms of the frame coefficients \cite{GP}.

\begin{theorem}\label{BesovR}
The norm of the Besov space 
$
\|f\|_{\mathcal{B}^{\alpha}_{p,q}({\bf M})} ,\>\>\>1\leq p< \infty, 0<q\leq\infty
$
is equivalent to the norm
$$
\|\tau(f)\|_{{\bf b}_{p,q}^{\alpha }}=
\left(\sum_{j = 0}^{\infty}
2^{jq(\alpha-n/p+n/2)}
\left(\sum_k |\langle f, \Psi^{j}_{k} \rangle|^p\right)^{q/p}\right)^{1/q}.
$$

\end{theorem}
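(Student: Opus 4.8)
The plan is to derive Theorem \ref{BesovR} from two independent facts, both of which are established in \cite{GP} and in the earlier work of the author cited there. The first is a spectral, Littlewood--Paley description of the Besov scale. Let $\{F_{j}\}_{j\ge 0}$ be the smooth resolution of unity on the spectrum of $\mathbf{L}$ underlying the construction of the frame in Theorem \ref{main 10}: each $F_{j}$ is supported in $[2^{2j-2},2^{2j+2}]$, for $j\ge 1$ one has $F_{j}(\lambda)=F(2^{-2j}\lambda)$ with a fixed $F\in C^{\infty}_{0}(0,\infty)$, and $\sum_{j\ge 0}F_{j}^{2}\equiv 1$ on $[0,\infty)$. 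I will use the equivalence
\begin{equation}\label{lp-char}
\|f\|_{\mathcal{B}^{\alpha}_{p,q}({\bf M})}\ \asymp\ \left(\sum_{j\ge 0}2^{jq\alpha}\,\|F_{j}(\mathbf{L})f\|_{L_{p}({\bf M})}^{q}\right)^{1/q},\qquad 0<\alpha,\ 1\le p<\infty,\ 0<q\le\infty,
\end{equation}
with the usual modification for $q=\infty$. Here the spectral band $[2^{2j-2},2^{2j+2})$ plays the role of a dyadic annulus because $\mathbf{L}$ is a second order operator, so that an eigenvalue $\lambda\asymp 2^{2j}$ corresponds to ``frequency'' $\sqrt{\lambda}\asymp 2^{j}$; (\ref{lp-char}) is obtained by comparing the real--interpolation definition (\ref{ellipticBesov}) with the spectral decomposition of $\mathbf{L}$ via Bernstein and Jackson inequalities and the uniform $L_{p}$-boundedness of the multipliers $F_{j}(\mathbf{L})$, the only delicate point being the treatment of the term $j=0$.

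The second fact is a Plancherel--Polya (Marcinkiewicz--Zygmund) type inequality for bandlimited functions on the lattices $\mathcal{M}^{\rho}_{r_{j}}$. First I would recall from \cite{GP} the concrete shape of the frame: $\Psi^{j}_{k}=\sqrt{w^{j}_{k}}\,\mathcal{K}_{j}(x^{j}_{k},\cdot)$, where $\mathcal{K}_{j}$ is the kernel of $F_{j}(\mathbf{L})$ and the cubature weights $w^{j}_{k}$ attached to $\mathcal{M}^{\rho}_{r_{j}}$ satisfy $w^{j}_{k}\asymp \bigl|B^{\rho}(x^{j}_{k},2^{-j})\bigr|\asymp 2^{-jn}$ and are chosen so that the corresponding cubature formula is exact on a spectral band containing the support of $F_{j}^{2}$ (this exactness is what makes $\{\Psi^{j}_{k}\}$ Parseval). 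Since $F_{j}(\mathbf{L})$ is self-adjoint, this gives $\bigl|\langle f,\Psi^{j}_{k}\rangle\bigr|=\sqrt{w^{j}_{k}}\,\bigl|g_{j}(x^{j}_{k})\bigr|$ with $g_{j}:=F_{j}(\mathbf{L})f\in\mathbf{E}_{[2^{2j-2},2^{2j+2})}(\mathbf{L})$, so that the frame coefficients at level $j$ are, up to the normalising weights, samples of the bandlimited function $g_{j}$. The claim I then need is that for every $g\in\mathbf{E}_{[2^{2j-2},2^{2j+2})}(\mathbf{L})$,
\begin{equation}\label{pp-ineq}
\sum_{k}\bigl|B^{\rho}(x^{j}_{k},2^{-j})\bigr|\,\bigl|g(x^{j}_{k})\bigr|^{p}\ \asymp\ \|g\|_{L_{p}({\bf M})}^{p},
\end{equation}
with constants independent of $j$. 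For the upper estimate I would bound $\bigl|g(x^{j}_{k})\bigr|^{p}$ by the mean of $|g|^{p}$ over $B^{\rho}(x^{j}_{k},2^{-j})$ plus a Poincar\'e correction of size $2^{-jp}\bigl\|\nabla_{\rho}g\bigr\|_{L_{p}(B^{\rho}(x^{j}_{k},2^{-j}))}^{p}$, then sum over $k$ using the bounded-overlap property (3) of Lemma \ref{cover} and absorb the correction by the Bernstein inequality $\|\nabla_{\rho}g\|_{L_{p}({\bf M})}\le C\,2^{j}\|g\|_{L_{p}({\bf M})}$. The lower estimate is dual: the balls $B^{\rho}(x^{j}_{k},r_{j}/2)$ cover ${\bf M}$ by property (2), on each of them $|g|^{p}$ is controlled by $\bigl|g(x^{j}_{k})\bigr|^{p}$ plus the same type of correction, and one sums and absorbs as before. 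In both directions the absorption closes only when the lattices are fine enough relative to the bandwidth, which is precisely the role of the calibration $r_{j}=2^{-j-1}$.

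Granting (\ref{lp-char}) and (\ref{pp-ineq}), the theorem is a bookkeeping exercise. From $\bigl|\langle f,\Psi^{j}_{k}\rangle\bigr|=\sqrt{w^{j}_{k}}\,\bigl|g_{j}(x^{j}_{k})\bigr|$, $w^{j}_{k}\asymp 2^{-jn}$ and (\ref{pp-ineq}) applied to $g=g_{j}$,
$$
\left(\sum_{k}\bigl|\langle f,\Psi^{j}_{k}\rangle\bigr|^{p}\right)^{1/p}\ \asymp\ 2^{-jn/2}\left(\sum_{k}\bigl|g_{j}(x^{j}_{k})\bigr|^{p}\right)^{1/p}\ \asymp\ 2^{-jn/2}\cdot 2^{jn/p}\,\|g_{j}\|_{L_{p}({\bf M})}\ =\ 2^{jn(1/p-1/2)}\|g_{j}\|_{L_{p}({\bf M})}.
$$
Hence the weight $2^{jq(\alpha-n/p+n/2)}$ occurring in $\|\tau(f)\|_{{\bf b}_{p,q}^{\alpha}}$ exactly cancels the powers of $2^{jn}$, and
$$
\|\tau(f)\|_{{\bf b}_{p,q}^{\alpha}}\ \asymp\ \left(\sum_{j\ge 0}2^{jq\alpha}\,\|F_{j}(\mathbf{L})f\|_{L_{p}({\bf M})}^{q}\right)^{1/q}\ \asymp\ \|f\|_{\mathcal{B}^{\alpha}_{p,q}({\bf M})}
$$
by (\ref{lp-char}), which is the assertion of the theorem. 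The hard part is the uniform two-sided inequality (\ref{pp-ineq}): one has to verify that the lattice density fixed in Lemma \ref{cover} is below the threshold at which the Bernstein-based absorption succeeds, uniformly in $j$, and that the cubature weights obey $w^{j}_{k}\asymp 2^{-jn}$ uniformly; the spectral characterization (\ref{lp-char}), while also non-trivial, is by now standard and can be quoted from \cite{GP}.
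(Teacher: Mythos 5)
Your proposal is correct and follows essentially the same route as the source the paper cites for this theorem (\cite{GP}): a Littlewood--Paley characterization of $\mathcal{B}^{\alpha}_{p,q}({\bf M})$ via the bands $F_{j}(\mathbf{L})f$, combined with uniform $L_{p}$ Plancherel--Polya inequalities on the lattices $\mathcal{M}^{\rho}_{r_{j}}$, with the weight $2^{jq(\alpha-n/p+n/2)}$ absorbing the normalization $w^{j}_{k}\asymp 2^{-jn}$. The two ingredients you flag as the real work (uniformity of the sampling inequality in $j$ and the multiplier bounds behind the spectral characterization) are exactly the ones established in \cite{GP}, so the argument is complete modulo those citations.
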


\subsection{Example of $\mathbf{S}^{2}$ with Riemannian metric}

We consider ${\bf M}=\mathbf{S}^{2}$. In this case the Casimir operator  coincides with the Laplace-Beltrami operator $\mathbf{L}$ on $\mathbf{S}^{2}$ and it can be written as a sum of the vector fields on $\mathbf{S}^{2}$:
$$
\mathbf{L}=\sum_{i,j=1; i<j}^{3} X_{i,j}^{2}=\sum_{i,j=1; i<j}^{3}(x_{i}\partial_{x_{j}}-x_{j}\partial_{x_{i}})^{2}=\mathbf{L}.
$$

Let $\mathcal{P}_{l}$ denote the space of spherical harmonics of degree $l$, which are restrictions to ${\bf S}^{2}$ of harmonic homogeneous polynomials of degree $l$ in ${\bf R}^{3}$.

Each $\mathcal{P}_{l}$ is the eigenspace of  $\mathbf{L}$ that corresponds to the eigenvalue $-l(l+1)$. Let $\mathcal{Y}_{n,l},\>\>n=1,...,2l+1$ 
 be an orthonormal basis in $\mathcal{P}_{l}$. One has
$$
\mathbf{L}\mathcal{Y}_{m,l}=-l(l+1)\mathcal{Y}_{m,l}.
$$

Sobolev spaces $W_{p}^{k}(\mathbf{L}), 1\leq p< \infty $, can be introduced as usual by using a system of local coordinates or by using vector fields $X_{i,j}$:
\begin{equation}
\|f\|_{W_{p}^{k}({\bf M})}=\|f\|_{p}+\sum\sum\|X_{i,j}....X_{i,j}f\|_{p}
\end{equation}
Corresponding  Besov spaces $\mathcal{B}_{p,q}^{\alpha}(\mathbf{L})$ can be described either using  local coordinates or in terms of the modules of continuity constructed in terms of one-parameter groups of rotations $e^{\tau X_{i,j}}$  \cite{Pes79}-\cite{Pes90a}.
In particular,  when $p=2$ the Parseval identity for orthonormal bases and the theory of interpolation spaces imply  descriptions of the norms 
 of $W_{2}^{k}(\mathbf{L})$ and $\mathcal{B}_{2,2}^{\alpha}(\mathbf{L})$  in terms of Fourier coefficients:

\begin{equation}\label{norm1}
\left(\sum_{l=0}^{\infty}\sum_{n=1}^{2l+1}(l+1)^{2\alpha}|c_{n,l}(f)|^{2}\right)^{1/2},
\end{equation}
where
$$
c_{n,l}(f)=\int_{{\bf S}^{d}}f\mathcal{Y}_{n,l},\>\>\>f\in L_{2}({\bf S}^{d}).
$$

\section{Sphere $S^{2}$ with a sub-Riemannian metric. A sub-Laplacian and sub-elliptic spaces on $S^{2}$}\label{2-sphere}

To illustrate nature of sub-elliptic spaces we will consider the case of two-dimensional sphere ${\bf S}^{2}$.  We consider on ${\bf S}^{2}$ two vector fields $Y_{1}=X_{2,3}$ and $Y_{2}=X_{1,3}$ and the corresponding sub-Laplace operator 
$$
\mathcal{L}=Y_{1}^{2}+Y_{2}^{2}.
$$
Note that since  the operators $Y_{1},\> Y_{2}$  do not span the tangent space to ${\bf S}^{2}$ along a great circle with $x_{3}=0$ the operator $\mathcal{L}$ is not elliptic on $\mathbf{S}^{2}$.
However, this operator is hypoelliptic \cite{Hor} since $Y_{1},\>Y_{2}, $ and their commutator $Y_{3}=Y_{1}Y_{2}-Y_{2}Y_{1}=X_{1, 2}$ span the tangent space at every point of ${\bf S}^{2}$.

Let's compute its corresponding eigenvalues. 
In the standard spherical coordinates $(\varphi, \vartheta)$ spherical  harmonics $
\mathcal{Y}_{m,l}(\varphi, \vartheta),\>\>l=0,1,...,\>\>|m|\leq l$ are proportional to $e^{im\varphi}P_{l}^{m}(\cos\>\vartheta)$, where $P_{l}^{m}$ are associated Legendre polynomials. 
This representation shows that for $Y_{3}=X_{1,2}$ one has
$$
Y_{3}^{2}\mathcal{Y}_{m,l}=-m^{2}\mathcal{Y}_{m,l}.
$$
Since $\mathcal{Y}_{m,l}$ is an eigenfunction of $\mathbf{L}$ with the eigenvalue $-l(l+1)$ we obtain
$$
\mathbf{L}\mathcal{Y}_{m,l}=-l(l+1)\mathcal{Y}_{m,l}
$$
and
$$
\mathcal{L}\mathcal{Y}_{m,l}=\mathbf{L}\mathcal{Y}_{m,l}-Y_{3}^{2}\mathcal{Y}_{m,l}=-\left(l(l+1)-m^{2}\right)\mathcal{Y}_{m,l}.
$$
It shows that spherical functions are eigenfunctions of both $\mathcal{L}$ and $\mathbf{L}$.

The graph norm of  a fractional power of $\mathcal{L}$  is equivalent to the norm 
\begin{equation}\label{norm2}
\left(\sum_{l=0}^{\infty}\sum_{|m|\leq l}\left((l+1)^{2}-m^{2}\right)^{\alpha}|c_{m,l}(f)|^{2}\right)^{1/2},
$$
$$
c_{m,l}(f)=\int_{{\bf S}^{d}}f\mathcal{Y}_{m,l},\>\>\>f\in L_{2}(\mathbf{L}).
\end{equation}
Note that these spaces $W_{2}^{\alpha}(\mathcal{L})$ are exactly the Besov spaces $\mathcal{B}^{\alpha}_{2, 2}(\mathcal{L})$.

We introduce subelliptic (anisotropic) Sobolev space $W_{2}^{\alpha}(\mathcal{L}),\>\>\alpha\geq 0,$ as the domain of $\mathcal{L}^{\alpha}$ with the graph norm and define Besov spaces $\mathcal{B}_{2,q}^{\alpha}(\mathcal{L})$ as

$$
\mathcal{B}^{\alpha}_{2,q}(\mathcal{L})=( L_{2}({\bf S}^{2}),W^{r}_{2}(\mathcal{L}))^{K}_{\theta, q},\>\>\> 0<\theta=\alpha/r<1,\>\>\>
1\leq q\leq \infty.
$$
where $K$ is the Peetre's interpolation functor.

Note that vector fields $Y_{1}, Y_{2}$  span  the tangent space to $\mathbf{S}^{2}$ at every point away from a great circle  $x_{3}=0$. For this reason around such points a function belongs to the domain of $
\mathcal{L}$ if and only if it belongs to the regular Sobolev space $W_{2}(\mathbf{L})$.

At the same time  the fields $Y_{1},\> Y_{2}$   do not span the tangent space to ${\bf S}^{2}$ along a great circle with $x_{3}=0$. However, the fields $Y_{1}, Y_{2}$ and their commutator $  Y_{3}=Y_{1}Y_{2}-Y_{2}Y_{1}=X_{1, 2}$ do span the tangent space along $x_{3}=0$.  This fact  implies that along   the circle $x_{3}=0$,              functions in the spaces $W_{2}^{r}(\mathcal{L})$ and  $
\mathcal{B}^{\alpha}_{2,q}(\mathcal{L})$ are loosing $1/2$ in smoothness compared to their smoothness at other points on ${\bf S}^{2}$. 
In other words, the following embeddings hold true
$$
W_{2}^{\alpha}(\mathbf{L})\subset W_{2}^{\alpha}(\mathcal{L})\subset W_{2}^{\alpha/2}(\mathbf{L}),
$$
$$
\mathcal{B}^{\alpha}_{2,q}({\mathbf{L}})\subset \mathcal{B}^{\alpha}_{2,q}(\mathcal{L})\subset \mathcal{B}^{\alpha/2}_{2,q}({\mathbf{L}}),
$$
which follow from a much more general results in \cite{RS}, \cite{NSW}, \cite{Pes90a}.

We would like to stress that subelliptic function spaces are different from the usual (elliptic) spaces.  For example, if  $W_{2}^{\alpha}(\mathbf{L}) $ is the regular Sobolev space than  general theory implies the embeddings 
$$
W_{2}^{\alpha}(\mathcal{L})\subset W_{2}^{\alpha/2}(\mathbf{L}),\>\>\>\mathcal{B}^{\alpha}_{2,q}(\mathcal{L})\subset \mathcal{B}^{\alpha/2}_{2,q}(\mathbf{L}).
$$
As the following Lemma shows, these embeddings are generally sharp.
\begin{lemma}
For every $\alpha>0$ and $\delta>\alpha/2$ there exists a function that belongs to $
W_{2}^{\alpha}(\mathcal{L})$ but does not belong to $W_{2}^{\delta}(\mathbf{L}).
$

\end{lemma}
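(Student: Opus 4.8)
The plan is to use the fact that both $\mathbf{L}$ and $\mathcal{L}$ are diagonalized by the spherical harmonics $\mathcal{Y}_{m,l}$ and to read off the two Sobolev norms from their Fourier-coefficient descriptions (\ref{norm1}) and (\ref{norm2}). The decisive observation is that on the ``equatorial'' harmonics $\mathcal{Y}_{\pm l,l}$ the eigenvalue $l(l+1)$ of $\mathbf{L}$ has size of order $l^{2}$, while the weight $(l+1)^{2}-m^{2}=(l+1)^{2}-l^{2}=2l+1$ that enters (\ref{norm2}) has size only of order $l$; this gap of order $l$ is precisely the quantitative form of the loss of $1/2$ of a derivative along the circle $x_{3}=0$, and it is what I would exploit. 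So I would look for the desired function as a lacunary series supported on these harmonics.

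Concretely, put $\varepsilon:=2\delta-\alpha>0$, take the rapidly growing degrees $l_{k}=2^{k}$, $k=1,2,\dots$, and set
$$
f=\sum_{k=1}^{\infty}c_{k}\,\mathcal{Y}_{l_{k},\,l_{k}},\qquad c_{k}=\frac{1}{k}\,l_{k}^{-\alpha/2}.
$$
The harmonics $\mathcal{Y}_{l_{k},l_{k}}$ are pairwise orthonormal and $\sum_{k}|c_{k}|^{2}=\sum_{k}k^{-2}l_{k}^{-\alpha}<\infty$, so $f\in L_{2}(\mathbf{S}^{2})$. Since $(l_{k}+1)^{2}-l_{k}^{2}=2l_{k}+1$ lies between $l_{k}$ and $3l_{k}$, the description (\ref{norm2}) gives
$$
\|f\|_{W_{2}^{\alpha}(\mathcal{L})}^{2}\asymp\sum_{k}\bigl((l_{k}+1)^{2}-l_{k}^{2}\bigr)^{\alpha}|c_{k}|^{2}\asymp\sum_{k}l_{k}^{\alpha}\,\frac{1}{k^{2}}\,l_{k}^{-\alpha}=\sum_{k}\frac{1}{k^{2}}<\infty,
$$
hence $f\in W_{2}^{\alpha}(\mathcal{L})$. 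On the other hand, by (\ref{norm1}) (equivalently, by Parseval applied to $\mathbf{L}^{\delta/2}f$),
$$
\|f\|_{W_{2}^{\delta}(\mathbf{L})}^{2}\asymp\sum_{k}(l_{k}+1)^{2\delta}|c_{k}|^{2}\asymp\sum_{k}l_{k}^{2\delta}\,\frac{1}{k^{2}}\,l_{k}^{-\alpha}=\sum_{k}\frac{2^{k\varepsilon}}{k^{2}}=\infty,
$$
because $\varepsilon>0$ forces the general term to blow up. Thus $f\notin W_{2}^{\delta}(\mathbf{L})$, as required.

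This already produces the function; what remains is routine. One has to record that for real $\delta\ge0$ the condition $f\in W_{2}^{\delta}(\mathbf{L})$ is equivalent to finiteness of the sum in (\ref{norm1}) — immediate from the definition of $W_{2}^{\delta}(\mathbf{L})$ as the domain of $\mathbf{L}^{\delta/2}$ with the graph norm, together with the spectral theorem and the fact that $\{\mathcal{Y}_{m,l}\}$ is an orthonormal eigenbasis of $\mathbf{L}$; and likewise for $W_{2}^{\alpha}(\mathcal{L})$ via (\ref{norm2}). One also has to note that the constants hidden in $\asymp$ are uniform in $k$, which holds because on the modes $m=l$ the quantities $l$, $l(l+1)-l^{2}$ and $(l+1)^{2}-l^{2}$ are comparable with absolute constants. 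I do not expect a genuine obstacle: the one point requiring a little care is to take the degrees $l_{k}$ growing fast enough that $\sum_{k}l_{k}^{2\delta-\alpha}k^{-2}$ truly diverges, and the lacunary choice $l_{k}=2^{k}$ makes this transparent (any sequence with $\sum_{k}l_{k}^{2\delta-\alpha}k^{-2}=\infty$ would serve equally well). Since $\delta>\alpha/2$ was arbitrary, this also shows that the embedding $W_{2}^{\alpha}(\mathcal{L})\subset W_{2}^{\alpha/2}(\mathbf{L})$ cannot be improved.
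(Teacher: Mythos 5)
Your proof is correct and follows essentially the same route as the paper's: both arguments place the Fourier coefficients on the sectorial harmonics $\mathcal{Y}_{l,l}$, where the weight in (\ref{norm2}) is $(2l+1)^{\alpha}\sim l^{\alpha}$ while the weight in (\ref{norm1}) is $(l+1)^{2\delta}\sim l^{2\delta}$, and then choose the coefficients to make the first sum converge and the second diverge. The only difference is cosmetic: the paper takes a power-law sequence $c_{l,l}=(2l+1)^{\gamma}$ over all degrees with $-\tfrac12-\delta<\gamma<-\tfrac12-\tfrac{\alpha}{2}$, whereas you take a lacunary sequence; both work.
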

\begin{proof}
For a $\delta>\alpha/2>0$ pick any $ \gamma$ that satisfies the inequalities
$$
-\frac{1}{2}-\delta<\gamma<-\frac{1}{2}-\frac{\alpha}{2}
$$

Let $c_{n,l}$ be a sequence such that $c_{n,l}=0$ if $n\neq l$ and $c_{l,l}=(2l+1)^{\gamma}$. For a function with such Fourier coefficients the norm (\ref{norm2}) is finite  since
$$
\sum_{l=0}^{\infty}(2l+1)^{\alpha}(2l+1)^{2\gamma}=\sum_{l=0}^{\infty}(2l+1)^{\alpha+2\gamma}<\infty,\>\>\> \alpha+2\gamma<-1,
$$
but the norm (\ref{norm1}) is infinite
$$
\sum_{l=0}^{\infty}(2l+1)^{2\delta}(2l+1)^{2\gamma}=\sum_{l=0}^{\infty}(2l+1)^{2(\delta+\gamma)},\>\>\>2(\delta+\gamma)>-1.
$$

\end{proof}

\section{Sub-Riemannian structure on compact homogeneous manifolds}\label{sub-Riem}

Let     $\mathbf{M}=\mathbf{G}/\mathbf{H}$ be a compact homogeneous manifold and      ${\bf X}=\{X_{1},\ ...,X_{d}\}$ be a basis of the Lie algebra $\mathbf{g}$, the same as in (\ref{fulsumsquares}). Let 
\begin{equation}\label{hvf}
{\bf Y}=\{Y_{1},...,Y_{m}\}
\end{equation}
 be a subset of ${\bf X}=\{X_{1},\ ...,X_{d}\}$ such that $Y_{1},...,Y_{m}$ and all their commutators 
\begin{equation}\label{com}
Y_{j,k}=[Y_{j}, \>Y_{k}]=Y_{j}Y_{k}-Y_{k}Y_{j},\>\>\>
$$
$$
Y_{j_{1},...,j_{n}}=[Y_{j_{1}},
[....[Y_{j_{n-1}}, Y_{j_{n}}]...]],
\end{equation}
 of order $n\leq Q$ span the entire algebra $\mathbf{g}$.
 Let 
\begin{equation}\label{vf}
Z_{1}=Y_{1},  Z_{2}=Y_{2},  ... , Z_{m}=Y_{m}, \>\>\>... \>\>\>, Z_{N},
\end{equation}
be an enumeration of all commutators (\ref{com}) up to order $n\leq Q$. If a $Z_{j}$ corresponds to a commutator of length $n$ we say that $deg(Z_{j})=n$.

Images of vector fields (\ref{vf}) under the natural projection $p: \mathbf{G}\rightarrow \mathbf{M}=\mathbf{G}/\mathbf{H}$ span the tangent space to $\mathbf{M}$ at every point and will be denoted by the same letters.

\begin{definition}
A sub-Riemann structure on $\mathbf{M}=\mathbf{G}/\mathbf{H}$ is defined as a set of vectors fields on $\mathbf{M}$ which are images of the vector fields (\ref{hvf}) under the projection $p$. They can also be identified with differential operators in $L_{p}({\bf M}),\>1\leq p<\infty,$ under the quasi-regular representation of ${\bf G}$.

\end{definition}

One can define  a non-isotropic metric $\mu$ on ${\bf M}$ associated with the fields $\{Y_{1},...,Y_{m}\}$.

\begin{definition}\cite{NSW}\label{metric}
Let $C(\epsilon)$ denote the class of absolutely continuous mappings $\varphi: [0,1]\rightarrow {\bf M}$ which almost everywhere satisfy the differential equation 
$$
\varphi^{'}(t)=\sum_{j=1}^{m}b_{j}(t)Z_{j}(\varphi(t)),
$$
where $|b_{j}(t)|<\epsilon^{deg(Z_{j})}$. Then we define 
$
\mu(x,y)$ as the lower bound of all such $\epsilon>0$ for which there exists $\varphi \in C(\epsilon)$ with $\varphi(0)=x,\> \varphi(1)=y$.
\end{definition}

The corresponding family of balls in ${\bf M}$ is given by 
$$
B^{\mu}(x,\epsilon)=\{y\in {\bf M} : \ \mu(x,y)<\epsilon\}.
$$
These balls reflect the non-isotropic nature of the vector fields $Y_{1},...,Y_{m}$ and their commutators.  For a small $\epsilon>0$  ball $B^{\mu}(x,\epsilon)$ is of size $\epsilon$ in the directions $Y_{1},...,Y_{m}$, but only of size $\epsilon^{n}$  in the directions of commutators of length $n$.

 It is known \cite{NSW}  that the following  property holds for certain $c=c(Y_{1},...,Y_{m}),\> C=C(Y_{1},...,Y_{m})$:
$$
c\rho(x,y)\leq \mu(x,y)\leq C\left(\rho(x,y)\right)^{1/Q}
$$
where $\rho$ stands for an $\mathbf{G}$-invariant Riemannian metric on ${\bf M}=\mathbf{G}/\mathbf{H}$.
We will be interested in  the following sub-elliptic operator (sub-Laplacian)
\begin{equation}\label{sub-L}
-\mathcal{L}=Y_{1}^{2}+...+Y_{m}^{2}
\end{equation}
which is hypoelliptic \cite{Hor} self-adjoint and non-negative in $L_{2}(\bf {M})$.

\begin{definition}
The space of $\omega$-bandlimited functions $\mathbf{E}_{\omega}(\mathcal{L})$ is defined as the span of all eigenfunctions of $\mathcal{L}$ whose eigenvalues are not greater than $\omega.$
\end{definition}

Due to the uncertainty principle bandlimited functions in $\mathbf{E}_{\omega}(\mathcal{L})$ are not localized on ${\bf M}$ in the sense that their supports coincide with ${\bf M}$. 

Using the operator $\mathcal{L}$ we define non-isotropic Sobolev spaces $W_{p}^{k}(\mathcal{L}),\>\>1\leq p<\infty, $ and non-isotropic Besov spaces $\mathcal{B}^{\alpha}_{p,q}(\mathcal{L}),\> \>1\leq p<\infty,\>1\leq q\leq \infty,$    by using formulas (\ref{BesselNorm}) and (\ref{ellipticBesov}) respectively.

\section{Product property for subelliptic Laplace operator}\label{product}

The results of this section   play a crucial role in our construction of the Parseval frames. 
In what follows we consider previously defined operators 
$$
-\mathbf{L}=X_{1}^{2}+X_{2}^{2}+\    ... +X_{d}^{2},    \ d=dim\ \mathbf{G},
$$
and
$$
-\mathcal{L}=Y_{1}^{2}+...+Y_{m}^{2},    \ m<d,
$$
as differential operators in $L_{2}(\bf {M})$.
\begin{lemma}\cite{GP}\label{prod}
\label{prodlem}
If ${\bf M}={\bf G}/{\bf H}$ is a compact homogeneous manifold then for any $f$ and $g$ in 
 ${\mathbf E}_{\omega}(\mathbf{L})$,  their product $fg$ belongs to
${\mathbf E}_{4d\omega}(\mathbf{L})$, where $d$ is the dimension of the
group ${\bf G}$.

\end{lemma}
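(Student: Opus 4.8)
The plan is to characterize bandlimited functions by a Bernstein-type growth condition on the iterates $\mathbf{L}^{k}$ and then to estimate $\|\mathbf{L}^{k}(fg)\|_{2}$ by expanding $\mathbf{L}^{k}$ into vector fields and applying the Leibniz rule. Recall that $\mathbf{E}_{\omega}(\mathbf{L})$ is finite dimensional, spanned by eigenfunctions $u_{j}$ with eigenvalues $\lambda_{j}\le\omega$, so all its elements are smooth and $\mathbf{L}^{k}$ is defined on them. The elementary fact I will use at the end is: if $h=\sum_{j}c_{j}u_{j}$ satisfies $\|\mathbf{L}^{k}h\|_{2}\le C\Lambda^{k}$ for all $k=0,1,2,\dots$ with a constant $C$ \emph{independent of $k$}, then $c_{j}=0$ whenever $\lambda_{j}>\Lambda$; indeed $\lambda_{j}^{2k}|c_{j}|^{2}\le\|\mathbf{L}^{k}h\|_{2}^{2}\le C^{2}\Lambda^{2k}$ forces $|c_{j}|^{2}\le C^{2}(\Lambda/\lambda_{j})^{2k}\to 0$. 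Hence it suffices to prove $\|\mathbf{L}^{k}(fg)\|_{2}\le C(f,g,{\bf M})\,(4d\omega)^{k}$ with a constant independent of $k$.

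Two auxiliary estimates are needed for an arbitrary $h\in\mathbf{E}_{\omega}(\mathbf{L})$. First, since $-\mathbf{L}=X_{1}^{2}+\dots+X_{d}^{2}$ and each $X_{i}$ generates a one-parameter group of unitary translation operators $T_{i}$ and is therefore skew-adjoint on $L_{2}({\bf M})$, one has $\sum_{i=1}^{d}\|X_{i}h\|_{2}^{2}=\langle\mathbf{L}h,h\rangle\le\omega\|h\|_{2}^{2}$, so $\|X_{i}h\|_{2}\le\omega^{1/2}\|h\|_{2}$ for each $i$. Second, because $-\mathbf{L}$ is bi-invariant it is central in $U(\mathbf{g})$ and hence commutes with each $X_{i}$ as an operator on $L_{2}({\bf M})$; therefore $X_{i}$ preserves every eigenspace of $\mathbf{L}$ and maps $\mathbf{E}_{\omega}(\mathbf{L})$ into itself. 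Iterating the first estimate along a word $M=X_{i_{1}}\cdots X_{i_{p}}$ (each intermediate function remaining in $\mathbf{E}_{\omega}(\mathbf{L})$) gives $\|Mh\|_{2}\le\omega^{p/2}\|h\|_{2}$. Finally, fixing an integer $s>\dim{\bf M}/2$, the Sobolev embedding $W_{2}^{s}({\bf M})\hookrightarrow L_{\infty}({\bf M})$ together with $\|\mathbf{L}^{s/2}h\|_{2}\le\omega^{s/2}\|h\|_{2}$ and the norm equivalence (\ref{BesselNorm}) yields $\|h\|_{L_{\infty}}\le C_{{\bf M}}\bigl(1+\omega^{s/2}\bigr)\|h\|_{2}$.

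Now expand $(-\mathbf{L})^{k}=(X_{1}^{2}+\dots+X_{d}^{2})^{k}=\sum_{i_{1},\dots,i_{k}=1}^{d}X_{i_{1}}^{2}\cdots X_{i_{k}}^{2}$, a sum of $d^{k}$ words of length $2k$ in the fields $X_{i}$. Each $X_{i}$ is a vector field on ${\bf M}$, hence obeys the Leibniz rule, so applying a word $M$ of length $2k$ to $fg$ produces $2^{2k}$ terms of the form $(M_{S}f)(M_{S^{c}}g)$, where $S$ runs over the subsets of the $2k$ positions, $M_{S}$ and $M_{S^{c}}$ are the ordered sub-words on $S$ and its complement, and $|S|+|S^{c}|=2k$. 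Since $X_{i}$ preserves $\mathbf{E}_{\omega}(\mathbf{L})$, both $M_{S}f$ and $M_{S^{c}}g$ lie in $\mathbf{E}_{\omega}(\mathbf{L})$, so the auxiliary estimates give
$$
\|(M_{S}f)(M_{S^{c}}g)\|_{2}\le\|M_{S}f\|_{L_{\infty}}\,\|M_{S^{c}}g\|_{2}\le C_{{\bf M}}\bigl(1+\omega^{s/2}\bigr)\,\omega^{|S|/2}\|f\|_{2}\cdot\omega^{|S^{c}|/2}\|g\|_{2}=C_{{\bf M}}\bigl(1+\omega^{s/2}\bigr)\|f\|_{2}\|g\|_{2}\,\omega^{k}.
$$
Summing over the $d^{k}$ words and the $2^{2k}$ Leibniz terms of each,
$$
\|\mathbf{L}^{k}(fg)\|_{2}\le d^{k}\cdot 2^{2k}\cdot C_{{\bf M}}\bigl(1+\omega^{s/2}\bigr)\|f\|_{2}\|g\|_{2}\,\omega^{k}=C_{{\bf M}}\bigl(1+\omega^{s/2}\bigr)\|f\|_{2}\|g\|_{2}\,(4d\omega)^{k},
$$
with the constant independent of $k$. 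By the first paragraph this forces $fg\in\mathbf{E}_{4d\omega}(\mathbf{L})$.

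I expect the only real subtlety to be bookkeeping: one must make sure that the constant from the Sobolev embedding and the factor $(1+\omega^{s/2})$ do not depend on $k$, so that they are irrelevant in the limit $k\to\infty$, and one must combine the multinomial expansion of $(-\mathbf{L})^{k}$ (giving $d^{k}$ words) with the Leibniz expansion of each word (giving $2^{2k}$ terms) to land on exactly the factor $4d$. The structural input that makes everything go through is the centrality of the Casimir operator $\mathbf{L}$: it keeps every iterated derivative $M_{S}f$ and $M_{S^{c}}g$ inside $\mathbf{E}_{\omega}(\mathbf{L})$, so their $L_{2}$- and $L_{\infty}$-norms stay controlled by fixed powers of $\omega$ instead of growing uncontrollably with $k$.
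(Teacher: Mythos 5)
Your proof is correct and follows essentially the same route as the paper's: expand $\mathbf{L}^{k}(fg)$ via the Leibniz rule into $(4d)^{k}$ terms, bound each by $\omega^{k}$ times an $\omega$-dependent (but $k$-independent) constant using the Bernstein inequality for words in the $X_{i}$ together with the Sobolev embedding, and conclude from the growth rate of $\|\mathbf{L}^{k}(fg)\|_{2}$. The only differences are cosmetic: you derive the word estimate by iterating the single-field bound using the centrality of $\mathbf{L}$ (the paper uses the identity $\|\mathbf{L}^{s/2}f\|_{2}^{2}=\sum\|X_{i_{1}}\cdots X_{i_{s}}f\|_{2}^{2}$, which rests on the same fact), and you spell out the final step that a bound $C\Lambda^{k}$ uniform in $k$ forces the spectrum into $[0,\Lambda]$, which the paper leaves implicit.
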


\begin{proof}

For every $X_{j}$ one has
$$
X_{j}^{2}(fg)=f(X_{j}^{2}g)+2(X_{j}f)(X_{j}g)+g(X_{j}^{2}f).
$$

Thus, the function ${\mathbf{L}}^{k}\left(fg\right)$ is a sum of
$(4d)^{k}$ terms of the form
$$
(X_{i_{1}}...X_{i_{m}}f)(X_{j_{1}}...X_{j_{2k-m}}g).
$$
This implies that
\begin{equation}
\left\|\mathbf{L}^{k}\left(fg\right)\right\|_{\infty}\leq
(4d)^{k}\sup_{0\leq m\leq 2k}\sup_{x,y\in
{\bf M}}\left|X_{i_{1}}...X_{i_{m}}f(x)\right|\left|X_{j_{1}}...X_{j_{2k-m}}g(y)\right|.\label{estim3}
\end{equation}
Let us show that for all $f,g \in {\bf E}_{\omega}({\mathbf{L}})$ the following inequalities hold:

\begin{equation} \label{estim1}
\|X_{i_{1}}...X_{i_{m}}f\|_{L_{2}({\bf M})}\leq
\omega^{m/2}\|f\|_{L_{2}({\bf M})}
\end{equation}

and
\begin{equation}
\|X_{j_{1}}...X_{j_{2k-m}}g\|_{L_{2}({\bf M})}\leq
\omega^{(2k-m)/2}\|g\|_{L_{2}({\bf M})}\label{estim2}.
\end{equation}
By construction (see (\ref{fulsumsquares})) the operator
$
-{\mathbf{L}}=X_{1}^{2}+\ ...+X_{d}^{2}
$
commutes with every $X_{j}$ and the same is
true for $(-\mathbf{L})^{1/2}$. From here  one can  obtain the
following equality:
\begin{equation}
\|{\mathbf{L}}^{s/2}f\|_{L_{2}({\bf M})}^{2}=\sum_{1\leq i_{1},...,i_{s}\leq
d}\|X_{i_{1}}...X_{i_{s}}f\|_{L_{2}({\bf M})}^{2},\ s\in \mathbf{N}, \label{eq0}
\end{equation}
which implies the estimates (\ref{estim1}) and (\ref{estim2}). 
The formula
(\ref{estim3}) along with the formula 
\begin{equation}
\|{\mathbf{L}}^{m/2}f\|_{L_{2}({\bf M})}\leq
\omega^{m/2}\|f\|_{L_{2}({\bf M})}.\label{estim4}
\end{equation}
 imply the
estimate
\begin{equation}
\|{\mathbf{L}}^{k}(fg)\|_{L_{2}({\bf M})}\leq (4d)^{k}\sup_{0\leq m\leq
2k}\|X_{i_{1}}...X_{i_{m}}f\|_{L_{2}({\bf M})}\|X_{j_{1}}...X_{j_{2k-m}}g\|_{\infty}\leq
$$
$$(4d)^{k}\omega^{m/2}\|f\|_{L_{2}({\bf M})}\sup_{0\leq m\leq
2k}\|X_{j_{1}}...X_{j_{2k-m}}g\|_{\infty}.
\end{equation}
Using the Sobolev embedding Theorem and the elliptic regularity of
$\mathbf{L}$, we obtain for every $s>\frac{dim {\bf M}}{2}$
\begin{equation}
\|X_{j_{1}}...X_{j_{2k-m}}g\|_{\infty}\leq
C({\bf M})\|X_{j_{1}}...X_{j_{2k-m}}g\|_{W_{2}^{s}({\bf M})}\leq
$$
$$
C({\bf M})\left\{\|X_{j_{1}}...X_{j_{2k-m}}g\|_{L_{2}({\bf M})}+
\|\mathbf{L}^{s/2}X_{j_{1}}...X_{j_{2k-m}}g\|_{L_{2}({\bf M})}\right\},
\end{equation}
where $W_{2}^{s}({\bf M})$ is the Sobolev space of $s$-regular functions on
${\bf M}$. The estimate (\ref{estim4}) gives the following
inequality:
\begin{equation}
\|X_{j_{1}}...X_{j_{2k-m}}g\|_{\infty}\leq
C({\bf M})\left\{\omega^{k-m/2}\|g\|_{L_{2}({\bf M})}+\omega^{k-m/2+s}\|g\|_{L_{2}({\bf M})}\right\}\leq
$$
$$
C({\bf M})\omega^{k-m/2}\left\{\|g\|_{L_{2}({\bf M})}+\omega^{s/2}\|g\|_{L_{2}({\bf M})}\right\}=
C({\bf M},g,\omega,s)\omega^{k-m/2},\>\>\>s>\frac{dim\  {\bf M}}{2}.
\end{equation}
Finally we have the following estimate:
\begin{equation}
\|\mathbf{L}^{k}(fg)\|_{L_{2}({\bf M})}\leq
C({\bf M},f,g,\omega,s)(4d\omega)^{k},\>\>\>s>\frac{dim \ {\bf M}}{2},\>\>k\in
\mathbf{N},
\end{equation}
which leads to our result.

\end{proof}

\begin{lemma}\label{EF}
There exist positive $c,\>C$ such that for $\omega>1$ the following embeddings hold
\begin{equation}
{\bf E}_{\omega}(\mathcal{L})\subset {\bf E}_{c\omega^{Q}}(\mathbf{L}) ,
\end{equation} 

\begin{equation}
 {\bf E}_{\omega}(\mathbf{L}) \subset {\bf E}_{C\omega}(\mathcal{L}).
\end{equation} 
\end{lemma}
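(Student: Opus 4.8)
The plan is to diagonalize $\mathbf{L}$ and $\mathcal{L}$ simultaneously and reduce both inclusions to a single uniform comparison of their eigenvalues. The Casimir operator $\mathbf{L}$ is bi-invariant on $\mathbf{G}$, hence central in the universal enveloping algebra $U(\mathbf{g})$; therefore it commutes with every $X_j$, and, since $\{Y_1,\dots,Y_m\}\subset\{X_1,\dots,X_d\}$, with $-\mathcal{L}=Y_1^2+\dots+Y_m^2$ as well, and this commutation persists under the quasi-regular representation in $L_2(\mathbf{M})$. Both operators are self-adjoint and, $\mathbf{M}$ being compact, have discrete spectra with finite-dimensional eigenspaces; since they commute, each eigenspace of one is invariant under the other, so $L_2(\mathbf{M})$ admits an orthonormal basis $\{\phi_k\}$ of common eigenfunctions, $\mathbf{L}\phi_k=\lambda_k\phi_k$, $\mathcal{L}\phi_k=\nu_k\phi_k$, with $\lambda_k,\nu_k\ge 0$. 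Consequently $\mathbf{E}_\omega(\mathbf{L})=\mathrm{span}\{\phi_k:\lambda_k\le\omega\}$ and $\mathbf{E}_\omega(\mathcal{L})=\mathrm{span}\{\phi_k:\nu_k\le\omega\}$ (each $\mathbf{L}$-eigenspace being spanned by the $\phi_k$ it contains, and likewise for $\mathcal{L}$), so it suffices to compare $\lambda_k$ and $\nu_k$ uniformly in $k$.

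For the inclusion $\mathbf{E}_\omega(\mathbf{L})\subset\mathbf{E}_{C\omega}(\mathcal{L})$ I would observe that $(-\mathbf{L})-(-\mathcal{L})=\sum_{j\,:\,X_j\notin\{Y_1,\dots,Y_m\}}X_j^2$ is a sum of squares of skew-symmetric operators, hence non-positive in $L_2(\mathbf{M})$; testing against $\phi_k$ gives $\nu_k=\langle(-\mathcal{L})\phi_k,\phi_k\rangle\le\langle(-\mathbf{L})\phi_k,\phi_k\rangle=\lambda_k$. Thus $\lambda_k\le\omega$ forces $\nu_k\le\omega$, and the inclusion holds with $C=1$.

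For $\mathbf{E}_\omega(\mathcal{L})\subset\mathbf{E}_{c\omega^Q}(\mathbf{L})$ the essential input is the sub-elliptic Sobolev embedding for H\"ormander sums of squares of step $Q$, already used in Section \ref{2-sphere} and, in the generality needed here, a consequence of \cite{RS}, \cite{NSW}, \cite{Pes90a}: the continuous inclusion $W_2^{2Q}(\mathcal{L})\subset W_2^{2}(\mathbf{L})$, that is, a constant $c_0>0$ with
$$
\|(\mathbf{L}+I)f\|_{L_2(\mathbf{M})}\le c_0\,\|(\mathcal{L}+I)^{Q}f\|_{L_2(\mathbf{M})}
$$
for smooth $f$. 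Applied to $\phi_k$ this yields $\lambda_k+1\le c_0(\nu_k+1)^{Q}$. Hence, if $\nu_k\le\omega$ then, using $\omega>1$,
$$
\lambda_k\le c_0(\nu_k+1)^{Q}-1\le c_0(\omega+1)^{Q}\le c_0\,2^{Q}\,\omega^{Q},
$$
so every $\phi_k$ occurring in an $f\in\mathbf{E}_\omega(\mathcal{L})$ lies in $\mathbf{E}_{c\omega^{Q}}(\mathbf{L})$ with $c:=c_0\,2^{Q}$. This completes the argument.

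The only substantive step is the sub-elliptic estimate displayed above; the rest is formal manipulation of the joint spectral decomposition of $\mathbf{L}$ and $\mathcal{L}$. If a self-contained derivation is preferred it can be obtained directly from the bracket condition: since the iterated commutators of $Y_1,\dots,Y_m$ of order $\le Q$ span $\mathbf{g}$, each $X_j$ is a fixed linear combination of such commutators, so $X_jf$ is a finite sum of terms $Y_{k_1}\cdots Y_{k_l}f$ with $l\le Q$; combining $\sum_i\|Y_i\psi\|^2_{L_2(\mathbf{M})}=\langle\mathcal{L}\psi,\psi\rangle$ with an induction on $l$ that absorbs the commutator errors $[\mathcal{L},Y_k]$ (themselves expressible through the $Y$'s and their brackets) gives $\|Y_{k_1}\cdots Y_{k_l}\phi_k\|_{L_2(\mathbf{M})}\le C_l(\nu_k+1)^{l/2}\|\phi_k\|_{L_2(\mathbf{M})}$, whence $\lambda_k\|\phi_k\|^2=\|\mathbf{L}^{1/2}\phi_k\|^2=\sum_j\|X_j\phi_k\|^2\le C(\nu_k+1)^{Q}\|\phi_k\|^2$. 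The real difficulty along that route is precisely the inductive commutator bookkeeping, which is what the cited works carry out in full, and which is why I would simply quote them.
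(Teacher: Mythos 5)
Your proof is correct, and it rests on the same essential external input as the paper's --- the sub-elliptic a priori estimate of Nagel--Stein--Wainger/Rothschild--Stein, $\|(\mathbf{L}+I)f\|\le c_0\|(\mathcal{L}+I)^{Q}f\|$, together with the centrality of $\mathbf{L}$ --- but it organizes the deduction differently. The paper keeps $f\in\mathbf{E}_{\omega}(\mathcal{L})$ arbitrary, iterates the estimate to get $\|\mathbf{L}^{l}f\|\le(2a\omega^{Q})^{l}\|f\|$ for all $l$, and then invokes (implicitly) the Bernstein-type characterization that such a bound for every power forces $f\in\mathbf{E}_{2a\omega^{Q}}(\mathbf{L})$; the reverse inclusion is handled symmetrically via $\|\mathcal{L}f\|\le b\|(I+\mathbf{L})f\|$. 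You instead pass to a joint orthonormal eigenbasis $\{\phi_k\}$ (legitimate here: $\mathbf{L}$ is central, so $\mathcal{L}$ preserves each finite-dimensional $\mathbf{L}$-eigenspace, where it can be diagonalized), which reduces both inclusions to the single scalar comparison of $\lambda_k$ with $\nu_k$ and avoids the characterization of bandlimited vectors by growth of $\|\mathbf{L}^{l}f\|$. Your argument for $\mathbf{E}_{\omega}(\mathbf{L})\subset\mathbf{E}_{C\omega}(\mathcal{L})$ is also genuinely more elementary and sharper than the paper's: the quadratic-form comparison $\langle\mathcal{L}\phi_k,\phi_k\rangle=\sum_i\|Y_i\phi_k\|^2\le\sum_j\|X_j\phi_k\|^2=\langle\mathbf{L}\phi_k,\phi_k\rangle$ (valid because the $X_j$ are skew-adjoint for the invariant measure and $\{Y_i\}\subset\{X_j\}$) gives $\nu_k\le\lambda_k$, hence $C=1$, with no appeal to a second a priori estimate. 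The closing ``self-contained'' sketch of the sub-elliptic estimate is only heuristic, but since you explicitly fall back on the cited references for that step --- exactly as the paper does --- this is not a gap.
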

\begin{proof}
There exists a constant $a=a(\mathbf{L},\>\mathcal{L})$ such that for all $f$ in the Sobolev space $W_{2}^{Q}(\mathbf{M})$ \cite{NSW}
$$
\|\mathbf{L}f\|\leq a\|(I+\mathcal{L})^{Q}f\|.
$$
Since $\mathbf{L}$ belongs to the center of the enveloping algebra of the Lie algebra $\mathbf{g}$ it commutes with $\mathcal{L}$. Thus one has for sufficiently smooth $f$:
$$
\|\mathbf{L}^{l}f\|\leq a^{l}\|(I+\mathcal{L})^{Q l}f\|,\>\> l\in \mathbf{R}.
$$
It implies that  if $f\in {\bf E}_{\omega}(\mathcal{L})$, then for $\omega\geq 1$
$$
\|\mathbf{L}^{l}f\|\leq a^{l}\|(I+\mathcal{L})^{Q l}f\|\leq \left(a(1+\omega)^{Q}\right)^{l}\|f\|\leq \left(2a\omega^{Q}\right)^{l}\|f\|,\>\> l\in \mathbf{R},
$$
which shows that $f\in {\bf E}_{2a\omega^{Q}}(\mathbf{L})$.
Conversely, since for some $b=b(\mathbf{L},\>\mathcal{L})$
$$
\|\mathcal{L}f\|\leq b\|(I+\mathbf{L})f\|,\>\>f\in W_{2}^{2}(\mathbf{M}),
$$
we have
$$
\|\mathcal{L}^{l}f\|\leq b^{l}\|(I+\mathbf{L})^{l}f\|,\>\>f\in W_{2}^{2l}(\mathbf{M}),
$$
and for $f\in {\bf E}_{\omega}(\mathbf{L})$
$$
\|\mathcal{L}^{l}f\|\leq b^{l}\|(I+\mathbf{L})^{l}f\|\leq \left(b(1+\omega)\right)^{l}\|f\|\leq (2b\omega)^{l}\|f\|,\>\>f\in W_{2}^{2l}(\mathbf{M}).
$$
\end{proof}

The product property of bandlimited functions is described in the following Theorem.
\begin{theorem}
\label{product} 
There exists a constant $C_{0}=C_{0}(\mathcal{L})>0$ such that for any $f,\>g\in {\bf E}_{\omega}(\mathcal{L})$ the product $fg$ belongs to ${\bf E}_{C_{0}\omega^{Q}}(\mathcal{L})$.
\end{theorem}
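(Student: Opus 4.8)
The plan is to reduce the sub-elliptic product property to the already-established elliptic one (Lemma \ref{prod}) by sandwiching the sub-elliptic bandlimited spaces between elliptic ones via Lemma \ref{EF}. First I would take $f,g\in{\bf E}_{\omega}(\mathcal{L})$. By the first embedding in Lemma \ref{EF}, there is a constant $c>0$ with ${\bf E}_{\omega}(\mathcal{L})\subset{\bf E}_{c\omega^{Q}}(\mathbf{L})$, so both $f$ and $g$ lie in the elliptic bandlimited space ${\bf E}_{c\omega^{Q}}(\mathbf{L})$.

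Next I would apply the elliptic product property of Lemma \ref{prod}: since $f,g\in{\bf E}_{c\omega^{Q}}(\mathbf{L})$, their product $fg$ belongs to ${\bf E}_{4dc\omega^{Q}}(\mathbf{L})$, where $d=\dim{\bf G}$. Now I would use the second embedding of Lemma \ref{EF}, namely ${\bf E}_{\eta}(\mathbf{L})\subset{\bf E}_{C\eta}(\mathcal{L})$ for $\eta\geq 1$, applied with $\eta=4dc\omega^{Q}$ (which exceeds $1$ once $\omega$ is not too small, and for small $\omega$ the spaces are finite-dimensional so the statement is trivial after adjusting the constant). This yields $fg\in{\bf E}_{4dcC\omega^{Q}}(\mathcal{L})$, so the theorem holds with $C_{0}=4dcC$, a constant depending only on $\mathcal{L}$ (through $c$, $C$, $Q$) and on $d$.

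The only subtlety to attend to is the lower range of $\omega$: Lemma \ref{EF} is stated for $\omega>1$, and the intermediate parameter $4dc\omega^{Q}$ must be at least $1$ to invoke the second embedding. For $\omega$ below a fixed threshold the space ${\bf E}_{\omega}(\mathcal{L})$ is spanned by finitely many eigenfunctions of $\mathcal{L}$ (the spectrum is discrete), so products of such functions lie in some fixed ${\bf E}_{\Lambda}(\mathcal{L})$, and one absorbs $\Lambda$ into $C_{0}$ by enlarging the constant; alternatively one simply states the result for $\omega\geq\omega_{0}$ as is standard in this circle of ideas. I do not expect any real obstacle here — the hard analytic work (the elliptic product property and the two-sided comparison of $\mathbf{L}$ and $\mathcal{L}$ on bandlimited spaces) has already been carried out in Lemma \ref{prod} and Lemma \ref{EF}; this theorem is essentially their composition, and the proof is a short chain of inclusions with bookkeeping of constants.
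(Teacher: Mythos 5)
Your proof is correct and is essentially identical to the paper's: both pass from ${\bf E}_{\omega}(\mathcal{L})$ into ${\bf E}_{c\omega^{Q}}(\mathbf{L})$ via Lemma \ref{EF}, apply the elliptic product property of Lemma \ref{prod} to land in ${\bf E}_{4dc\omega^{Q}}(\mathbf{L})$, and return to ${\bf E}_{C_{0}\omega^{Q}}(\mathcal{L})$ by the reverse embedding. Your extra remark on handling small $\omega$ is a reasonable piece of bookkeeping that the paper leaves implicit.
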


\begin{proof}
If $f,g\in {\bf E}_{\omega}(\mathcal{L})$ then $f,g\in {\bf E}_{c\omega^{Q}}(\mathbf{L})$. According to Lemma \ref{prod}  their product $fg$ belongs to ${\bf E}_{4dc\omega^{Q}}(\mathbf{L})$ which implies that for some $C_{0}=C_{0}(\mathcal{L})$ the product $fg$ belongs to ${\bf E}_{C_{0}\omega^{Q}}(\mathcal{L})$.

\end{proof}

\section{Positive cubature formulas on sub-Riemannian manifolds}

Now we are going to prove existence of cubature formulas which are exact on $\mathbf{E}_{\omega}(\mathcal{L})$,
and have positive coefficients of the right size.

Let $\mathcal{M}_{r}=\{x_{k}\}$  be a $r$-lattice and $\{B^{\mu}(x_{k},r)\}$ be an associated family of balls that satisfy only properties (1) and (2) of  Lemma \ref{cover-0}. 
We define
$$
U_{1}=B^{\mu}(x_{1}, r/2)\setminus \cup_{i,\>i\neq 1}B^{\mu}(x_{i}, r/4),
$$
and
\begin{equation}\label{U}
U_{k}=B^{\mu}(x_{k},  r/2)\setminus \left(\cup_{j<k}U_{j}\cup_{i,\>i\neq k}B^{\mu}(x_{i},  r/4)\right).
\end{equation}
One can verify the following properties.
\begin{lemma} The sets $\left\{U_{k}\right\}$ form a disjoint measurable cover (up to a set of measure zero) of $\mathbf{M}$ and
\begin{equation}\label{disjcover}
B^{\mu}(x_{k}, r/4)\subset U_{k}\subset B^{\mu}(x_{k}, r/2)
\end{equation}
\end{lemma}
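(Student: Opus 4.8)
The plan is to verify the three asserted properties of the sets $\{U_{k}\}$ directly from their recursive definition in (\ref{U}), using only properties (1) and (2) of Lemma \ref{cover-0}.

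First I would establish the left inclusion in (\ref{disjcover}), namely $B^{\mu}(x_{k}, r/4)\subset U_{k}$. The point is that $B^{\mu}(x_{k}, r/4)$ is contained in $B^{\mu}(x_{k}, r/2)$ and, by the disjointness property (1), it is disjoint from every $B^{\mu}(x_{i}, r/4)$ with $i\neq k$. So the only thing removed from $B^{\mu}(x_{k}, r/2)$ that could intersect $B^{\mu}(x_{k}, r/4)$ is $\cup_{j<k}U_{j}$. Here I would argue by induction on $k$: assuming $U_{j}$ for $j<k$ is disjoint from $B^{\mu}(x_{k}, r/4)$ — which follows because $U_{j}\subset B^{\mu}(x_{j}, r/2)\setminus\bigcup_{i\neq j}B^{\mu}(x_{i},r/4)$ is in particular disjoint from $B^{\mu}(x_{k}, r/4)$ by property (1) applied with the index $k$ — we get that nothing in $\cup_{j<k}U_{j}$ meets $B^{\mu}(x_{k}, r/4)$, hence the entire ball $B^{\mu}(x_{k}, r/4)$ survives into $U_{k}$. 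The right inclusion $U_{k}\subset B^{\mu}(x_{k}, r/2)$ is immediate from (\ref{U}).

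Next I would check that the $U_{k}$ are pairwise disjoint. For $j<k$, by construction $U_{k}$ explicitly has $U_{j}$ removed from it (it is part of the union $\cup_{j<k}U_{j}$ subtracted in (\ref{U})), so $U_{k}\cap U_{j}=\emptyset$. Finally, to see that $\bigcup_{k}U_{k}$ covers $\mathbf{M}$ up to measure zero, I would use property (2): the balls $B^{\mu}(x_{k}, r/2)$ already cover $\mathbf{M}$. Given a point $y\in\mathbf{M}$, let $k$ be the smallest index with $y\in B^{\mu}(x_{k}, r/2)$. If $y$ lies in no ball $B^{\mu}(x_{i}, r/4)$ with $i\neq k$ and in no $U_{j}$ with $j<k$, then $y\in U_{k}$ by definition. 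The exceptional set consists of points lying on some sphere $\{\mu(x_{i}, \cdot)=r/4\}$ (the boundary distinguishing $r/4$-balls, which are open) together with, for each $k$, the points of $B^{\mu}(x_{k},r/2)$ that got assigned to an earlier $U_{j}$ or fell in a closed $r/4$-ball around another center; one checks that the only genuinely uncovered points are those on the finitely many boundary spheres $\partial B^{\mu}(x_{i}, r/4)$, a finite union of lower-dimensional sets, hence of measure zero.

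The main obstacle is the measure-zero bookkeeping in the covering statement: one must be careful that "removing" the open balls $B^{\mu}(x_{i}, r/4)$ from the open balls $B^{\mu}(x_{k}, r/2)$ leaves only the boundary spheres uncovered, and that these spheres indeed have $dx$-measure zero — this uses that $\mu$ is comparable to a Riemannian distance $\rho$ (via the inequality $c\rho\le\mu\le C\rho^{1/Q}$ quoted earlier), so that $\mu$-spheres are contained in sets that are Lipschitz images of lower-dimensional sets and thus null for the smooth measure $dx$. Everything else is elementary set algebra driven by the disjointness property (1) and the covering property (2), organized by the induction on the index $k$.
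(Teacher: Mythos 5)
The paper itself offers no argument for this lemma (it simply says ``one can verify''), so there is nothing to compare against; I will judge your proof on its own. Your treatment of the two inclusions in (\ref{disjcover}) and of pairwise disjointness is correct: the left inclusion follows exactly as you say, since each $U_{j}$ with $j\neq k$ has $B^{\mu}(x_{k},r/4)$ explicitly subtracted from it (so the induction you announce is not even needed), and disjointness is built into the recursion.

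The one soft spot is the covering step. Your case analysis is incomplete as written --- you only treat the point $y$ via the minimal $k$ with $y\in B^{\mu}(x_{k},r/2)$ and then retreat to an ``exceptional set'' of boundary spheres --- and the justification you offer for that exceptional set being null is not sound: the comparison $c\rho\leq\mu\leq C\rho^{1/Q}$ does not make a $\mu$-sphere a Lipschitz image of a lower-dimensional set, and for Carnot--Carath\'eodory metrics spheres can be geometrically wild. Fortunately none of this is needed, because the cover is in fact exact. Split on whether $y$ lies in some $r/4$-ball. If $y\in B^{\mu}(x_{i},r/4)$ for some $i$, then $y$ is in no other $r/4$-ball by property (1), and $y\notin U_{j}$ for every $j\neq i$ because $B^{\mu}(x_{i},r/4)$ is removed in the definition of $U_{j}$; hence $y\in U_{i}$ directly from (\ref{U}). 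If $y$ lies in no $r/4$-ball, take the minimal $k$ with $y\in B^{\mu}(x_{k},r/2)$ (property (2)); either $y\in U_{j}$ for some $j<k$, or $y$ avoids $\bigcup_{j<k}U_{j}$ and trivially avoids $\bigcup_{i\neq k}B^{\mu}(x_{i},r/4)$, whence $y\in U_{k}$. So $\bigcup_{k}U_{k}=\mathbf{M}$ with no exceptional set, and measurability is immediate since each $U_{k}$ is a finite Boolean combination of balls. With this correction your proof is complete.
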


 We have the following Plancherel-Polya inequalities  \cite{Pes00}, \cite{Pes04b}.

\begin{theorem}\label{PPT}
There exist  positive constants  $a_{1}=a_{1}(\mathbf{M, Y}), a_{2}=a_{2}(\mathbf{M, Y})$, and $a_{0}=a_{0}(\mathbf{M,Y})$ such that, if for a given  $\omega>0$ one has
\begin{equation}\label{rate}
0<r<a_{0}\omega,
\end{equation}
then for any metric $r$-lattice $\mathcal{M}_{r}=\{x_{k}\}$  the following inequalities hold
\begin{equation}
a_{1}\sum_{k}|U_{k}||f(x_{k})|^{2}\leq\|f\|_{L_{2}(\mathbf{M})}
\leq a_{2}\sum_{k} |U_{k}||f(x_{k})|^{2}, \label{PP}
\end{equation}
for every $f\in \mathbf{E}_{\omega}(\mathcal{L}).$
\end{theorem}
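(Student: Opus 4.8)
The plan is to reduce the sub-Riemannian statement to the already-understood elliptic one via the embeddings of Lemma \ref{EF}, and then to run the standard local-Bernstein argument on the partition $\{U_k\}$. First I would fix $f \in \mathbf{E}_{\omega}(\mathcal{L})$ and, using Lemma \ref{EF}, note that $f \in \mathbf{E}_{c\omega^{Q}}(\mathbf{L})$; by elliptic regularity of $\mathbf{L}$ and the Sobolev embedding theorem on $\mathbf{M}$, $f$ is continuous (indeed smooth), so the point values $f(x_k)$ make sense. The upper bound $\|f\|_{L_2(\mathbf{M})}^2 \le a_2 \sum_k |U_k| |f(x_k)|^2$ is the genuinely quantitative half. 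For each $k$ write
\begin{equation}
\int_{U_k} |f(x)|^2\,dx \le |U_k|\,\sup_{x\in B^{\mu}(x_k,r/2)}|f(x)|^2,
\end{equation}
and then estimate the oscillation of $f$ on the ball $B^{\mu}(x_k,r/2)$ in terms of a derivative of $f$ along the horizontal vector fields $Y_1,\dots,Y_m$ and their commutators. Using Definition \ref{metric}, any point $y\in B^{\mu}(x_k,r)$ is joined to $x_k$ by a curve whose velocity is a combination $\sum b_j(t) Z_j$ with $|b_j(t)| < r^{\deg Z_j}$, so
\begin{equation}
|f(y)-f(x_k)| \le \sum_{j=1}^{N} r^{\deg Z_j}\,\sup_{B^{\mu}(x_k,r)} |Z_j f|.
\end{equation}
Since each $Z_j$ is a fixed polynomial in $Y_1,\dots,Y_m$ of degree $\deg Z_j$, and since bandlimitedness in $\mathbf{E}_\omega(\mathcal{L})$ gives a Bernstein inequality $\|\mathcal{L}^{s/2} f\| \le \omega^{s/2}\|f\|$ (hence control of $\|Y_{i_1}\cdots Y_{i_s} f\|_{L_2}$, and by the Sobolev/elliptic-regularity argument also of $\|Y_{i_1}\cdots Y_{i_s} f\|_\infty$, exactly as in the proof of Lemma \ref{prodlem}), one gets $r^{\deg Z_j}\sup|Z_j f| \lesssim (r\sqrt{\omega})^{\deg Z_j}\|f\|_{\mathrm{loc}}$; choosing $r < a_0\omega^{-1/2}$ (this is what the condition \eqref{rate} must really read, matching the scaling $r_j = 2^{-j-1}$ against $\omega \sim 2^{2j}$) makes each such term a small multiple of a local $L_2$-type quantity. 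Summing a discretized version over the bounded-overlap cover $\{B^{\mu}(x_k,r)\}$ — here property (3) of Lemma \ref{cover-0}, with its constant $N^{\mu}_{\mathbf{M}}$, is essential — absorbs the oscillation term into the left side and yields $\|f\|^2 \le a_2 \sum_k |U_k| |f(x_k)|^2$.

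For the lower bound $a_1 \sum_k |U_k| |f(x_k)|^2 \le \|f\|_{L_2(\mathbf{M})}^2$, I would run the reverse estimate on each $U_k$: since $B^{\mu}(x_k,r/4)\subset U_k$ by \eqref{disjcover} and the measure doubling-type comparability $|B^{\mu}(x_k,r/4)| \asymp |B^{\mu}(x_k,r/2)| \asymp |U_k|$ (a known consequence of the Nagel–Stein–Wainger structure \cite{NSW}), one has
\begin{equation}
|f(x_k)|^2 \le 2\,\frac{1}{|B^{\mu}(x_k,r/4)|}\int_{B^{\mu}(x_k,r/4)}|f(x)|^2\,dx + 2\,\Big(\sup_{B^{\mu}(x_k,r/4)}|f-f(x_k)|\Big)^2,
\end{equation}
and the oscillation term is again controlled by $(r\sqrt\omega)$-small factors times local $L_2$ mass; multiplying by $|U_k|$, summing, and using disjointness of the $U_k$ (hence of the $B^{\mu}(x_k,r/4)$) gives the claim.

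The main obstacle is the uniform-in-$k$, uniform-in-$\omega$ Bernstein-type bound for sup-norms of iterated horizontal derivatives $Y_{i_1}\cdots Y_{i_s}f$ on small balls, with the correct powers of $r$ and $\omega$ aligned to the anisotropic ball geometry; the delicate point is that the Sobolev embedding one uses to pass from $L_2$ to $L_\infty$ is elliptic (built from $\mathbf{L}$), so one must carefully track how the factor $c\omega^{Q}$ from Lemma \ref{EF} interacts with the required smallness of $r$, and verify that the final constants depend only on $\mathbf{M}$ and the chosen family $\mathbf{Y}$. Everything else is the standard Plancherel–Polya machinery (see \cite{Pes00}, \cite{Pes04b}) transported to the sub-elliptic setting.
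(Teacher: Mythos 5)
Your overall strategy coincides with the paper's: split $f(x)$ into $f(x_{k})$ plus an oscillation term on each $U_{k}$, estimate the oscillation by Newton--Leibniz along horizontal curves in the directions of the $Y_{i}$ and their commutators, control the resulting derivatives through Bernstein-type inequalities for $\mathbf{E}_{\omega}(\mathcal{L})$, and absorb the oscillation contribution using the bounded overlap of the cover; the lower bound is handled the same way. You also correctly spot that the printed hypothesis $0<r<a_{0}\omega$ must be a misprint for a negative power of $\omega$.

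The one step you leave open is exactly the one that decides the admissible range of $r$, and your guessed form of it is too optimistic. You claim $r^{\deg Z_{j}}\sup_{B^{\mu}(x_{k},r)}|Z_{j}f|\lesssim (r\sqrt{\omega})^{\deg Z_{j}}\,\|f\|_{\mathrm{loc}}$, i.e.\ that each horizontal derivative costs $\omega^{1/2}$ in sup norm against a local $L_{2}$ quantity. A pointwise bound on a tiny ball cannot be extracted from the local $L_{2}$ mass at that price: passing from $L_{2}$ to $L_{\infty}$ costs an additional Sobolev-embedding factor (roughly $\omega^{s/2}$ with $s>\dim {\bf M}/2$ in the elliptic route, or its anisotropic analogue), which is precisely the issue you flag but do not resolve. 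The paper sidesteps an honest sup bound: it dominates $\sup_{B^{\mu}(x_{k},r/2)}|Y_{i_{1}}\cdots Y_{i_{l}}f|$ by the local anisotropic Sobolev norm $\|Y_{i_{1}}\cdots Y_{i_{l}}f\|_{H^{Q/2+\varepsilon}(B^{\mu}(x_{k},r/2))}$ (Nagel--Stein--Wainger), sums these squares over $k$ via bounded overlap to reach the global quantity $\|f\|^{2}+\|\mathcal{L}^{Q}f\|^{2}\le (1+C\omega^{2Q})\|f\|^{2}$, and then absorbs under the condition $Cr^{n+2}(1+\omega)^{Q}<1$, i.e.\ $r\lesssim \omega^{-Q/(n+2)}$ --- not $r\lesssim \omega^{-1/2}$. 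So your plan is structurally right, but the ``local $L_{2}$-type quantity'' must be replaced by a local Sobolev norm (with the absorption performed only after global summation), and the resulting threshold on $r$ is different from the one you propose. The preliminary detour through $\mathbf{E}_{c\omega^{Q}}(\mathbf{L})$ and elliptic Sobolev embedding is unnecessary (though it would work, with still worse powers of $\omega$). For the lower bound the paper only remarks that it follows from the Sobolev and Bernstein inequalities; your averaging over $B^{\mu}(x_{k},r/4)$ together with disjointness is consistent with that, subject to the same caveat about how the oscillation term is controlled.
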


\begin{proof}
One has 

$$
|f(x)|\leq |f(x_{k})|+|f(x)-f(x_{k})|,
$$
$$
\int_{U_{k}}|f(x)|^{2}dx\leq 2\left(|U_{k}||f(x_{k})|^{2}+\int_{U_{k}}|f(x)-f(x_{k})|^{2}dx\right),
$$
and
$$
\|f\|^{2}\leq \sum_{k}\int_{U_{k}}|f(x)|^{2}dx\leq 2\left( \sum_{k}|U_{k}||f(x_{k})|^{2}+ \sum_{k}\int_{U_{k}}|f(x)-f(x_{k})|^{2}dx\right).
$$

Take an $X\in   \mathbf{g},\>\>|X|=1,     $ for which $ \exp\ tX\cdot x_{k} =x$ for some $t\in \mathbf{R}$. Since every such vector field (as a field on  $\mathbf{M}$)  is a linear combination   of the fields $[Y_{i_{1}},...[Y_{i_{l-1}}, Y_{i_{l}}]...], 1\leq l\leq Q, 1\leq i_{j}\leq m$, the Newton-Leibniz formula applied to a smooth $f$ along the corresponding integral curve joining  $x$ and $x_{k}$ gives
$$
|f(x)-f(x_{k})|^{2}\leq Cr^{2}\sum_{l=1}^{Q} \sum _{1\leq i_{1},i_{2},... i_{l}\leq m}
 \left(\sup_{y\in B^{\mu}(x_{k},r/2)}\left|Y_{i_{1}}Y_{i_{2}}... Y_{i_{l}}f(y)\right|\right)^{2}.
 $$
Applying anisotropic version of the Sobolev inequality \cite{NSW} we
obtain

$$
|f(x)-f(x_{k})|^{2}\leq C r^{2}\sum_{l=1}^{Q} \sum_{1\leq i_{1},i_{2},... i_{l}\leq m}\left(\sup_{y\in
B^{\mu}(x_{k},r/2)}\left|Y_{i_{1}}Y_{i_{2}}... Y_{i_{l}}f(y)\right|\right)^{2}\leq
$$
 $$
 C r^{2} \sum^{Q}_{l=0}\sum _{1\leq i_{1},i_{2}, ...
,i_{l}\leq m}\|
Y_{i_{1}}Y_{i_{2}}...Y_{i_{l}}f\|^{2}_{H^{Q/2+\varepsilon}(B^{\mu}(x_{k}, r/2))},$$
where $ x\in U_{k},\>\>\varepsilon > 0,\>\>
C=C(\varepsilon).$ Next, 
$$
\sum_{k}\int _{B^{\mu}(x_{k}, r/2)}|f(x)-f(x_{k})|^{2}dx \leq 
$$
 $$
 C r^{n+2} \sum^{Q}_{l=0}\sum _{1\leq i_{1},i_{2}, .. i_{l}\leq m}\sum_{k}\|Y_{i_{1}}... Y_{i_{l}}f\|^{2}_{H^{Q/2+\varepsilon}(B^{\mu}(x_{k}, r/2))}\leq
 $$ 
 $$ 
 C r^{n+2}\sum^{Q}_{l=0}\sum _{1\leq i_{1},...,i_{l}\leq m}\|Y_{i_{1}}... Y_{i_{l}}f\|^{2}_{H^{Q/2+\varepsilon}(\mathbf{M})}\leq 
  C r^{n+2}        \left(\|f\|^{2}+\|\mathcal{L}^{Q}f\|^{2}\right).
 $$
 All together we obtain the inequality
 $$
 \|f\|^{2}\leq 2 \sum_{k}|U_{k}||f(x_{k})|^{2}+Cr^{n+2} \left(\|f\|^{2}+\|\mathcal{L}^{Q}f\|^{2}\right).
 $$
Note that for $f\in \mathbf{E}_{\omega}(\mathcal{L})$
$$
\|\mathcal{L}^{Q}f\|\leq C\omega^{Q}\|f\|.
$$
 Thus, if for a given $\omega>0$ we pick an $r>0$  a way that 
 $$
 Cr^{n+2}(1+\omega)^{Q}<1
 $$
 then for a certain $C_{1}=C_{1}(M)>0$ one obtains the right-hand side of (\ref{PP})
 $$
  \|f\|^{2}\leq C_{1} \sum_{k}|U_{k}||f(x_{k})|^{2}.
 $$
 The left-hand side of (\ref{PP}) follows from the Sobolev and Bernstein inequalities. 
\end{proof}

The Plancherel-Polya inequalities (\ref{PP}) can be used to prove the so-called sub-elliptic positive cubature formula. The proof goes along the same lines as in \cite{GP}, \cite{pg},  (see also \cite{FM}, \cite{CKP}).

The  precise   statement is the following.

\begin{theorem} 
\label{cubature}
There exists a constant $a=a(\mathbf{M, Y})>0$ such that for a given $\omega>0$  if $r=a\omega^{-1}$ then for any $r$-lattice $ \mathcal{M}_{r}=\{x_{k}\}$ 
there exist strictly positive coefficients $\{\alpha_{k}\}$, \  for which the following equality holds for all functions in $ \mathbf{E}_{\omega}(\mathcal{L})$:
\begin{equation}
\label{cubway}
\int_{\mathbf{M}}fdx=
\sum_{k}f(x_{k}) \alpha_{k} .
\end{equation}
Moreover, there exists constants  $\  b_{1}>0, \  b_{2}>0, $  such that  the following inequalities hold:
\begin{equation}\label{setsmeasures}
b_{1}|U_{k}|\leq  \alpha_{k}\leq b_{2}|U_{k}|,
\end{equation}
where the sets $U_{k}$ are defined in (\ref{U}).
\end{theorem}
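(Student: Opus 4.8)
\textbf{Proof plan for Theorem \ref{cubature} (positive cubature on sub-Riemannian manifolds).}

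The plan is to deduce the cubature formula from the Plancherel--Polya inequalities of Theorem \ref{PPT} by a duality/convexity argument applied in the finite-dimensional Hilbert space $\mathbf{E}_{\omega}(\mathcal{L})$. First I would fix $\omega>0$, choose $r=a\omega^{-1}$ with $a$ small enough that the hypothesis $0<r<a_{0}\omega$ of Theorem \ref{PPT} holds (note that for a genuinely small constant $a$ and all $\omega$ bounded away from $0$ this is automatic, and the small-$\omega$ range is handled by enlarging constants), and fix an $r$-lattice $\mathcal{M}_{r}=\{x_{k}\}$ together with the associated disjoint cover $\{U_{k}\}$ from (\ref{U}), so that (\ref{disjcover}) and the two-sided bound (\ref{PP}) are available. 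The key point is that (\ref{PP}) says precisely that the sampling map $f\mapsto (f(x_{k}))_{k}$ is a linear isomorphism of $\mathbf{E}_{\omega}(\mathcal{L})$ onto its image in the weighted sequence space $\ell^{2}(\{|U_{k}|\})$, with norm equivalence constants $a_{1},a_{2}$.

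The main step is to produce the weights. I would consider the convex optimization problem of minimizing $\sum_{k}\alpha_{k}^{2}/|U_{k}|$ (or equivalently $\sum_k \beta_k^2 |U_k|$ after rescaling $\alpha_k=\beta_k|U_k|$) over all nonnegative sequences $\{\alpha_{k}\}$ subject to the finitely many linear constraints $\sum_{k}\alpha_{k}\varphi(x_{k})=\int_{\mathbf{M}}\varphi\,dx$ for $\varphi$ ranging over an orthonormal basis of $\mathbf{E}_{\omega}(\mathcal{L})$. The feasible set is nonempty: a natural feasible point is obtained from the Plancherel--Polya frame, e.g. by taking the least-squares solution $\alpha_{k}=|U_{k}|\,\overline{h(x_{k})}$ where $h\in\mathbf{E}_{\omega}(\mathcal{L})$ reproduces the functional $\varphi\mapsto\int\varphi$ under the sampling inner product; feasibility of some solution follows because the sampling map is onto. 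A minimizer exists and is unique by strict convexity and coercivity (coercivity uses the lower bound in (\ref{PP})). The positivity $\alpha_{k}>0$ and the two-sided estimate (\ref{setsmeasures}) then come from analyzing the KKT/Lagrange conditions for the minimizer: the optimal $\alpha_{k}$ equals $|U_{k}|$ times the value at $x_{k}$ of a fixed function $\psi\in\mathbf{E}_{\omega}(\mathcal{L})$ (the Lagrange multiplier combination), and one shows $\psi(x_{k})$ is bounded above and below by absolute constants using (\ref{PP}) applied to $\psi$ together with the fact that $\psi$ itself reproduces the integration functional on the sampled data. This is exactly the scheme of \cite{GP}, \cite{pg}, \cite{FM}, \cite{CKP}, and since the only analytic inputs are (\ref{PP}) and elementary Hilbert-space/convexity facts, the argument transfers verbatim from the Riemannian to the sub-Riemannian setting once Theorem \ref{PPT} is in hand.

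The delicate point — and the place where the sub-Riemannian character really enters — is making sure the estimate (\ref{setsmeasures}) has constants independent of $j$ (i.e. of $\omega$ and $r$), which in turn rests on the uniformity of the constants $a_{1},a_{2}$ in Theorem \ref{PPT} and on the bounded-overlap constant $N^{\mu}_{\mathbf{M}}$ of Lemma \ref{cover-0}; here one must be careful that balls $B^{\mu}(x_{k},r)$ of the same radius can have wildly different volumes $|U_{k}|$, so all estimates must be phrased in terms of $|U_{k}|$ rather than a uniform power of $r$. I expect the bulk of the work to be bookkeeping to confirm that every constant produced along the way depends only on $\mathbf{M}$ and the fixed family $\mathbf{Y}=\{Y_{1},\dots,Y_{m}\}$ (through $Q$, $N^{\mu}_{\mathbf{M}}$, and the Plancherel--Polya constants) and not on the scale; given the earlier results in the excerpt, no genuinely new obstacle arises.
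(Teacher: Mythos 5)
The paper does not actually prove Theorem \ref{cubature}: it only asserts that ``the proof goes along the same lines as in \cite{GP}, \cite{pg}'', so your plan has to be measured against the argument in those references. Your overall strategy --- derive the cubature formula from the Plancherel--Polya inequalities of Theorem \ref{PPT} by a finite-dimensional convexity argument in $\mathbf{E}_{\omega}(\mathcal{L})$, and then check that every constant depends only on $\mathbf{M}$ and $\mathbf{Y}$ --- is the right one. The gap is in the one step that actually matters: positivity of the weights and the two-sided bound (\ref{setsmeasures}). If you minimize $\sum_{k}\alpha_{k}^{2}/|U_{k}|$ subject only to the exactness constraints, the minimizer is $\alpha_{k}=|U_{k}|\psi(x_{k})$ with $\psi\in\mathbf{E}_{\omega}(\mathcal{L})$ the representer of $f\mapsto\int f$ in the discrete inner product $\sum_{k}|U_{k}|f(x_{k})\overline{g(x_{k})}$. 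The inequality (\ref{PP}) controls only $\sum_{k}|U_{k}||\psi(x_{k})|^{2}$ (it gives, e.g., $\|\psi\|_{2}\leq a_{2}$), and there is no route from this to a uniform pointwise lower bound $\psi(x_{k})\geq b_{1}>0$: passing from $L_{2}$ to pointwise information inside $\mathbf{E}_{\omega}$ costs a Nikolskii factor that grows with $\omega$, and least-squares quadrature weights are in general \emph{not} positive. Imposing $\alpha_{k}\geq 0$ in the program makes the minimizer nonnegative by fiat, but then complementary slackness truncates $\psi$ and the lower bound in (\ref{setsmeasures}) is exactly what the KKT analysis cannot deliver.

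The argument in the cited references avoids this entirely. One fixes the ansatz $\alpha_{k}\in[\tfrac{1}{2}|U_{k}|,\tfrac{3}{2}|U_{k}|]$ --- so that (\ref{setsmeasures}) and strict positivity are built into the constraint set --- and shows this box-constrained feasibility problem is solvable by a separating-hyperplane (linear programming duality) argument: the set of functionals $f\mapsto\sum_{k}\alpha_{k}f(x_{k})$ with $\alpha_{k}$ in these intervals is a compact convex subset of $\mathbf{E}_{\omega}(\mathcal{L})^{*}$, and it contains $f\mapsto\int_{\mathbf{M}}f\,dx$ provided the one-sided Marcinkiewicz--Zygmund estimate $\left|\int_{\mathbf{M}}f\,dx-\sum_{k}|U_{k}|f(x_{k})\right|\leq\frac{1}{2}\sum_{k}|U_{k}||f(x_{k})|$ holds on $\mathbf{E}_{\omega}(\mathcal{L})$. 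That estimate, not (\ref{PP}) itself, is the analytic input, and it is obtained for $r=a\omega^{-1}$ with $a$ small by the same Newton--Leibniz plus anisotropic Sobolev estimates used in the proof of Theorem \ref{PPT} (note in passing that the hypothesis $0<r<a_{0}\omega$ there should be read as $r<a_{0}\omega^{-1}$, as that proof makes clear). So your plan needs to be repaired either by switching to this box-constraint/duality scheme, or, if you want to keep a variational formulation, by minimizing the weighted distance of $\{\alpha_{k}\}$ to the nominal weights $\{|U_{k}|\}$ and proving $\sup_{k}|\psi(x_{k})|\leq 1/2$ for the resulting corrector --- which again requires the Marcinkiewicz--Zygmund estimate rather than (\ref{PP}).
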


\section{Space localization of kernels}

According to the spectral theorem if $F$ is a Schwartz function on the line, then there is a well defined operator $F(\mathcal{L})$ in the space $L_{2}(\mathbf{\mathbf{M}})$ such that for any $f\in L_{2}(\mathbf{\mathbf{M}})$ one has 
\begin{equation}\label{function}
\left(F(\mathcal{L})f\right)(x)=\int_{\mathbf{\mathbf{M}}}\mathcal{K}^{F}(x,y)f(y)dy,
\end{equation}
where $dy$ is the invariant normalized measure on $\mathbf{\mathbf{M}}$. If $\left\{\lambda_{j}\right\} $ and $\left\{u_{j}\right\}$ are sets of eigenvalues and eigenfunctions of $\mathcal{L}$ respectively then 
\begin{equation}\label{kernel-0}
\mathcal{K}^{F}(x,y)=\sum_{j=0}^{\infty}F(\lambda_{j})u_{j}(x)\overline{u_{j}}(y).
\end{equation}
We will be especially interested in operators of the form $F(t^{2}\mathcal{L})$, where $F$ is a Schwartz function and $t>0$. The corresponding kernel will be denoted as $\mathcal{K}_{t}^{F}(x,y)$ and 
\begin{equation}\label{kernel-1}
\mathcal{K}_{t}^{F}(x,y)=\sum_{j=0}^{\infty}F(t^{2}\lambda_{j})u_{j}(x)\overline{u_{j}}(y).
\end{equation}
Note, that variable  $t$ here is a kind of scaling parameter.

The following important estimate was proved in \cite{CKP} in the setting of the so-called Dirichlet spaces. It is a consequence of the main result in \cite{M} that sub-Riemannin manifolds we consider in our article  are the Dirichlet spaces.
\begin{theorem}
If $F\in C_{0}^{\infty}(\mathbf{R})$ is even than for every $N>2Q$ there exists a $C_{N}=C_{N}(F,\mathbf{M, Y})>0$ such that 
\begin{equation}\label{local}
\left| \mathcal{K}_{t}^{F}(x,y)\right|\leq C_{N}\left(\left|B^{\mu}(x, t)\right|\left|B^{\mu}(y, t)\right|\right)^{-1/2}\left(1+t^{-1}\mu(x,y)\right)^{-N},\>\>\>0<t\leq 1.
\end{equation}

\end{theorem}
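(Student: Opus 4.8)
The plan is to derive the kernel estimate \eqref{local} by verifying that the sub-Riemannian manifolds under consideration fit into the axiomatic framework of Dirichlet spaces (in the sense of \cite{CKP}, \cite{M}), and then quoting the Gaussian-type heat-kernel-driven localization result proved there. Concretely, the three ingredients needed are: (i) the metric measure space $(\mathbf{M}, \mu, dx)$ satisfies the volume doubling property for the $\mu$-balls $B^{\mu}(x,t)$; (ii) the heat semigroup $e^{-t\mathcal{L}}$ has a kernel obeying Gaussian upper and lower bounds relative to $\mu$; and (iii) $\mathcal{L}$ is a non-negative self-adjoint operator generating a symmetric Markov (Dirichlet) form whose intrinsic metric is comparable to $\mu$. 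Once these are in place, the statement is exactly the band-limited-kernel localization theorem for Dirichlet spaces.

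First I would record that $(\mathbf{M},\mu,dx)$ is a space of homogeneous type: the doubling estimate $\left|B^{\mu}(x,2t)\right|\le C\left|B^{\mu}(x,t)\right|$ for $0<t\le 1$ follows from the Nagel--Stein--Wainger ball-volume computations \cite{NSW}, which also give the ``non-isotropic'' structure of these balls mentioned after Definition \ref{metric} (size $t$ in the directions $Y_1,\dots,Y_m$ and size $t^{\deg}$ in the commutator directions). Next I would invoke the fact — this is the content of \cite{M} — that the hypoelliptic sub-Laplacian $-\mathcal{L}=Y_1^2+\dots+Y_m^2$ on the compact homogeneous manifold $\mathbf{M}$ generates a strictly local, regular Dirichlet form whose associated intrinsic (Carnot--Carath\'eodory type) distance coincides with $\mu$ up to constants, and whose heat kernel $p_t(x,y)$ satisfies two-sided Gaussian bounds
\begin{equation}
\frac{c_1}{\left|B^{\mu}(x,\sqrt{t})\right|}\,e^{-C_1\mu(x,y)^2/t}\le p_t(x,y)\le \frac{c_2}{\left|B^{\mu}(x,\sqrt{t})\right|}\,e^{-c_2'\mu(x,y)^2/t},\qquad 0<t\le 1.
\end{equation}
These three properties — doubling, Gaussian bounds, and intrinsic-metric identification — are precisely the hypotheses under which the localization machinery of \cite{CKP} operates.

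With the Dirichlet-space setup confirmed, the final step is to apply the general result: for an even $F\in C_0^{\infty}(\mathbf{R})$, the operator $F(t^2\mathcal{L})$ has kernel $\mathcal{K}_t^{F}(x,y)=\sum_j F(t^2\lambda_j)u_j(x)\overline{u_j(y)}$ satisfying, for every $N$, the bound
\begin{equation}
\left|\mathcal{K}_t^{F}(x,y)\right|\le C_N\left(\left|B^{\mu}(x,t)\right|\,\left|B^{\mu}(y,t)\right|\right)^{-1/2}\left(1+t^{-1}\mu(x,y)\right)^{-N},\qquad 0<t\le 1.
\end{equation}
The standard route to this inequality (which I would sketch rather than reprove) writes $F(t^2\lambda)=\int_{\mathbf{R}}\widehat{F}(\xi)\,e^{i\xi t\sqrt{\lambda}}\,d\xi$ via the Fourier inversion of $F$ as a function of $\sqrt{\lambda}$, uses finite propagation speed of the wave operator $\cos(\tau\sqrt{\mathcal{L}})$ with respect to $\mu$ to control off-diagonal behavior, and combines this with the on-diagonal heat-kernel size $p_t(x,x)\asymp\left|B^{\mu}(x,\sqrt{t})\right|^{-1}$ together with doubling to symmetrize the two volume factors; the rapid decay in $N$ comes from the Schwartz decay of $F$. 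The restriction $N>2Q$ merely reflects that the polynomial growth exponent of the volume function is governed by the maximal commutator length $Q$, so the argument needs $N$ beyond the homogeneous dimension bound to absorb volume comparisons. The main obstacle — and the reason the theorem is quoted rather than proved from scratch — is establishing (ii), the two-sided Gaussian heat-kernel bounds for a genuinely non-elliptic operator; but this is exactly what \cite{M} supplies for the present class of manifolds, so here it is legitimate to cite it and combine with \cite{CKP}.
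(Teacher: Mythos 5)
Your proposal follows essentially the same route as the paper: the paper likewise does not prove this estimate from scratch but quotes the band-limited kernel localization theorem of \cite{CKP} for Dirichlet spaces, justifying its applicability by appealing to \cite{M} for the heat-kernel/Dirichlet-space structure of the sub-Laplacian on compact homogeneous manifolds. Your additional spelling-out of the hypotheses to be verified (doubling of the $\mu$-ball volumes via \cite{NSW}, two-sided Gaussian bounds, identification of the intrinsic metric with $\mu$) and the sketch of the finite-propagation-speed mechanism is a faithful and somewhat more explicit account of what the citation is actually carrying.
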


\section{ Parseval space-frequency localized frames on sub-Riemannian manifolds and proof of Theorem \ref{main-1}}

 Let $g\in C^{\infty}(\mathbf{R}_{+})$ be a monotonic function with support in $ [0,\>  2^{2}], $ and $g(s)=1$ for $s\in [0,\>1], \>0\leq g(s)\leq 1, \>s>0.$ Setting  $G(s)=g(s)-g(2^{2}s)$ implies that $0\leq G(s)\leq 1, \>\>s\in supp\>G\subset [2^{-2},\>2^{2}].$  Clearly, $supp\>G(2^{-2j}s)\subset [2^{2j-2}, 2^{2j+2}],\>j\geq 1.$ For the functions
 $
 F_{0}(s)=\sqrt{g(s)}, \>\>F_{j}(s)=\sqrt{G(2^{-2j}s)},\>\>j\geq 1, \>\>\>
 $
 one has $\sum_{j\geq 0}F_{j}^{2}(s)=1, \>\>s\geq 0$.
 Using the spectral theorem for $\mathcal{L}$ one  can define bounded self-adjoint operators $F_{j}(\mathcal{L})$ as
 $$
 F_{j}(\mathcal{L})f(x)=\int_{\mathbf{\mathbf{M}}}\mathcal{K}^{F}_{2^{-j}}(x,y)f(y)dy,
 $$
where 
\begin{equation}\label{kernel}
\mathcal{K}^{F}_{2^{-j}}(x,y)= \sum_{\lambda_{m}\in [2^{2j-2}, 2^{2j+2}]} F(2^{-2j}\lambda_m) u_m(x) \overline{u_m(y)}.
\end{equation}
The same spectral theorem implies  
$
\sum_{j\geq 0} F_{j}^2(\mathcal{L})f = f,\>\>f \in  L_{2}(\mathbf{\mathbf{M}}),
$
and taking inner product with $f$ gives
\begin{equation}
\label{norm equality-0}
\|f\|^2=\sum_{j\geq 0}\left< F_{j}^2(\mathcal{L})f,f\right>=\sum_{j\geq 0}\|F_{j}(\mathcal{L})f\|^2 .
\end{equation}
 Moreover, since the function $  F_{j}(s)$ has its support in  $
[2^{2j-2},\>\>2^{2j+2}]$ the functions $ F_{j}(\mathcal{L})f $ are bandlimited to  $
[2^{2j-2},\>\>2^{2j+2}]$.

Next, consider the sequence $\omega_{j}=2^{2j+2},\>j=0, 1, ....\>$.  
By (\ref{norm equality-0}) the equality  $
\|f\|^2=\sum_{j\geq 0}\|F_{j}(\mathcal{L})f\|^2 $ holds, where every  function $ F_{j}(\mathcal{L})f $ is bandlimited to  $
[2^{2j-2},\>\>2^{2j+2}]$.
Since for every $
\overline{F_{j}( \mathcal{L})f} \in \mathbf{\mathbf{E}}_{2^{2j+2}}(\mathcal{L})$
one can use  Theorem \ref{product}  to conclude that
$$
|F_{j}( \mathcal{L})f|^2\in  \mathbf{\mathbf{E}}_{C_{0}2^{Q(2j+2)}}(\mathcal{L}).
$$
According to Theorem \ref{cubature}  there exists a constant $a=a(\mathbf{M, Y})>0$ such that for all natural  $j$ if
\begin{equation}
\label{rate}
r_j = b2^{-Q(j+1)},\>\>b=aC_{0},
\end{equation}
then for any  $r_{j}$-lattice $\mathcal{M}_{r_{j}}$ one can find positive coefficients $\alpha_{j,k}$ with 
for which the following exact cubature formula holds
\begin{equation}
\label{samrate}
\|F_{j}(\mathcal{L})f\|^2_2 = \sum_{k=1}^{K_j}\alpha_{j,k}\left|F_{j}(\mathcal{L})f(x_{j,k})\right|^2,
\end{equation}
where $x_{j,k} \in \mathcal{M}_{r_{j}}$, $k = 1,\ldots,K_j = card\>(\mathcal{M}_{r_{j}})$.
Using the kernel $\mathcal{K}_{2^{-j}}^{F}$  of the operator $F_{j}(\mathcal{L})$
we  define the functions
\begin{equation}
\label{vphijkdf}
\Theta_{j,k}(y) =  \sqrt{\alpha_{j,k}}\>\overline{\mathcal{K}^{F}_{2^{-j}}}(x_{j,k},y) = 
$$
$$
\sqrt{\alpha_{j,k}} \sum_{\lambda_{m}\in [2^{2j-2}, 2^{2j+2}]} \overline{F}(2^{-2j}\lambda_m) \overline{u}_m(x_{j,k}) u_m(y).
\end{equation}
One can easily see that for every  $f \in L_2(\mathbf{\mathbf{M}})$ the equality 
$ \|f\|^2_2 = \sum_{j,k} |\langle f,\Theta_{j,k} \rangle|^2$ holds. Moreover, the first two items of Theorem \ref{main-1} are also satisfied.  Thus, Theorem \ref{main-1} is proven.

As an application one can obtain  description of sub-elliptic Besov spaces $\mathcal{B}_{p,q}^{\alpha }(\mathcal{L}),\>\>1\leq p<\infty,\>1\leq q\leq \infty,$ in terms of the Fourier coefficients with respect to this frame $\left\{\Theta_{j,k}\right\}$. 

Consider the quasi-Banach space ${\bf b}_{p,q}^{\alpha }$ which consists of
 sequences
$s=\{s^j_k\}$ ($j \geq 0,\ 1 \leq k \leq {\mathcal K}_j$)
 satisfying
\begin{equation}
\label{sjkbes}
\|s\|_{{\bf b}_{p,q}^{\alpha }}=\left(\sum_{j \geq 0}^{\infty} 2^{j\alpha q} \left(\sum_k\left| B^{\mu}(x_{k}^{j},\>2^{-j})\right|^{1/p-1/2} |s^j_k|^p\right)^{q/p}\right)^{1/q} < \infty,
\end{equation}
 and introduce  the following mappings
\begin{equation}\label{tau}
\tau(f) = \{\langle f, \Theta^{j}_{k}\rangle\},
\end{equation}
and
 \begin{equation}\label{sigma}
\sigma(\{s^j_k\}) = \sum_{j\geq 0}^{\infty}\sum_k  s^j_k  \Theta^{j}_{k}.
\end{equation}
It is not difficult to prove the following result (see \cite{GP} for the Riemann case).
\begin{theorem}
\label{beshom}
Let  $\Theta^{j}_{k}$ be the same as above. Then
 for $1\leq p< \infty,\>\>0<q\leq \infty,\>\>\alpha>0$
the following statements are valid:
\begin{enumerate}
\item  $\tau$ in (\ref{tau}) is a well defined bounded operator $\tau: \mathcal{B}_{p,q}^{\alpha }(\mathcal{L}) \to
{\bf b}_{p,q}^{\alpha}$;
\item $\sigma$ in (\ref{sigma}) is a well defined bounded operator $\sigma: {\bf b}_{p,q}^{\alpha } \to \mathcal{B}_{p,q}^{\alpha }(\mathcal{L})$;
\item  $\sigma \circ \tau = id$;
\end{enumerate}

Moreover,  the following norms are equivalent: 
$$
\|f\|_{\mathcal{B}^{\alpha}_{p,q}(\mathcal{L})}  \asymp \|\tau(f)\|_{{\bf b}_{p,q}^{\alpha }},
$$
where
$$
\|\tau(f)\|_{{\bf b}_{p,q}^{\alpha }}=
\left(\sum_{j \geq 0}^{\infty} 2^{j\alpha q} \left(\sum_k\left| B^{\mu}(x_{k}^{j},\>2^{-j})\right|^{1/p-1/2} |s^j_k|^p\right)^{q/p}\right)^{1/q}.
$$

The constants in these norm equivalence   relations
can be estimated uniformly over compact ranges of the parameters $p,q,\alpha$.

\end{theorem}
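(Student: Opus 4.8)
\textbf{Proof plan for Theorem \ref{beshom}.}
The plan is to follow the now-standard frame-characterization scheme for Besov spaces, transporting the Riemannian argument of \cite{GP} to the sub-Riemannian setting, the only substantive change being that the homogeneous dimension is replaced everywhere by the local volume factors $|B^{\mu}(x^{j}_{k}, 2^{-j})|$, which is exactly why these volumes appear in the norm \eqref{sjkbes}. First I would fix a Littlewood--Paley-type description of $\mathcal{B}^{\alpha}_{p,q}(\mathcal{L})$: using the operators $F_{j}(\mathcal{L})$ and an auxiliary family of operators whose symbols are supported in a slightly enlarged dyadic corona and equal to $1$ on $[2^{2j-2},2^{2j+2}]$, one shows that
$$
\|f\|_{\mathcal{B}^{\alpha}_{p,q}(\mathcal{L})}\asymp\left(\sum_{j\geq 0}2^{j\alpha q}\|F_{j}(\mathcal{L})f\|_{L_{p}(\mathbf{M})}^{q}\right)^{1/q}.
$$
This equivalence follows from the definition \eqref{ellipticBesov} via real interpolation together with the spectral decomposition $\sum_{j}F_{j}^{2}(\mathcal{L})=I$ and the Bernstein/Sobolev inequalities available for $\mathcal{L}$ (these are the sub-elliptic analogues of the estimates used in Lemma \ref{prodlem} and Theorem \ref{PPT}); I would state it as a lemma and cite \cite{GP}, \cite{Pes90a} for the routine interpolation part.

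Next I would prove boundedness of $\tau$. Given $f\in\mathcal{B}^{\alpha}_{p,q}(\mathcal{L})$, write $\langle f,\Theta^{j}_{k}\rangle=\sqrt{\alpha_{j,k}}\,\overline{(F_{j}(\mathcal{L})f)(x_{j,k})}$ because $\Theta^{j}_{k}=\sqrt{\alpha_{j,k}}\,\overline{\mathcal{K}^{F}_{2^{-j}}}(x_{j,k},\cdot)$ reproduces $F_{j}(\mathcal{L})f$ by the spectral calculus; then use $\alpha_{j,k}\asymp|U^{j}_{k}|\asymp|B^{\mu}(x^{j}_{k},r_{j})|$ from \eqref{setsmeasures} together with the discrete Plancherel--Polya inequality \eqref{PP} in $L_{p}$ form. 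The $L_{p}$ version of \eqref{PP} for bandlimited functions --- i.e. $\sum_{k}|U^{j}_{k}|\,|h(x_{j,k})|^{p}\asymp\|h\|_{L_{p}(\mathbf{M})}^{p}$ for $h\in\mathbf{E}_{\omega_{j}}(\mathcal{L})$ --- is obtained exactly as in the proof of Theorem \ref{PPT}, replacing the $L_{2}$ Sobolev/Bernstein steps by their $L_{p}$ counterparts from \cite{NSW}; combining it term-by-term with the Littlewood--Paley description above yields $\|\tau(f)\|_{{\bf b}^{\alpha}_{p,q}}\lesssim\|f\|_{\mathcal{B}^{\alpha}_{p,q}(\mathcal{L})}$. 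For boundedness of $\sigma$ I would estimate $\|\sum_{k}s^{j}_{k}\Theta^{j}_{k}\|_{L_{p}(\mathbf{M})}$ for each fixed $j$: since each $\Theta^{j}_{k}$ is bandlimited to $[2^{2j-2},2^{2j+2})$ and satisfies the localization bound \eqref{localiz}, the kernel of the synthesis operator at level $j$ is dominated by $\sum_{k}|s^{j}_{k}|\,|B^{\mu}(x^{j}_{k},2^{-j})|^{-1/2}(1+2^{j}\mu(x^{j}_{k},y))^{-N}$, and a Schur-type / maximal-function argument using the doubling property of $|B^{\mu}(\cdot,\cdot)|$ (valid since the $\mathbf{M}$ are Dirichlet spaces, as noted before the localization theorem) gives $\|\sum_{k}s^{j}_{k}\Theta^{j}_{k}\|_{L_{p}(\mathbf{M})}\lesssim(\sum_{k}|B^{\mu}(x^{j}_{k},2^{-j})|^{1/p-1/2}|s^{j}_{k}|)^{\,}$ in the appropriate $\ell^{p}$ sense; summing over $j$ with the weights $2^{j\alpha q}$ and invoking the Littlewood--Paley description once more (the pieces at different levels $j$ have almost-disjoint spectra, so their $\mathcal{B}^{\alpha}_{p,q}$ norm is controlled by the $\ell^{q}$ sum of the individual norms) gives $\|\sigma(s)\|_{\mathcal{B}^{\alpha}_{p,q}(\mathcal{L})}\lesssim\|s\|_{{\bf b}^{\alpha}_{p,q}}$.

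The identity $\sigma\circ\tau=\mathrm{id}$ is then immediate from the Parseval property \eqref{ tight}: for $f\in\mathcal{B}^{\alpha}_{p,q}(\mathcal{L})\subset L_{2}(\mathbf{M})$ (density and the already-established boundedness let one reduce to this case), item (4) of Theorem \ref{main-1} gives $f=\sum_{j,k}\langle f,\Theta^{j}_{k}\rangle\Theta^{j}_{k}=\sigma(\tau(f))$ in $L_{2}$, and the convergence upgrades to $\mathcal{B}^{\alpha}_{p,q}(\mathcal{L})$ by the bound on $\sigma$. The norm equivalence $\|f\|_{\mathcal{B}^{\alpha}_{p,q}(\mathcal{L})}\asymp\|\tau(f)\|_{{\bf b}^{\alpha}_{p,q}}$ follows by combining the two boundedness statements with $\sigma\circ\tau=\mathrm{id}$. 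Finally, to get the uniformity of constants over compact parameter ranges I would simply track that every constant entering the argument --- the interpolation constants, the $a_{i}$ of Theorem \ref{PPT}, the $b_{i}$ of Theorem \ref{cubature}, the $C_{N}$ of the localization estimate, and the doubling constant --- depends only on $\mathbf{M}$, $\mathbf{Y}$, $Q$ and continuously on $(p,q,\alpha)$, hence is bounded on compacta. The main obstacle I anticipate is the $\sigma$-boundedness step: one must handle the non-isotropic, non-doubling-in-radius behaviour of the balls $B^{\mu}$ (balls of equal radius can have wildly different volumes), so the Schur/maximal argument has to be run with the genuine space-of-homogeneous-type structure $(\mathbf{M},\mu,dx)$ rather than with a fixed power of $2^{-j}$ as in the Riemannian case; the localization estimate \eqref{localiz} with its symmetric volume normalization $(|B^{\mu}(x,t)||B^{\mu}(y,t)|)^{-1/2}$ is precisely what makes this work, and keeping the volume factors in the right places throughout is where the care is needed.
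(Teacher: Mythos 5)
Your plan is essentially the paper's intended argument: the paper offers no proof of Theorem \ref{beshom} beyond deferring to the Riemannian case in \cite{GP}, and your scheme (a Littlewood--Paley description of $\mathcal{B}^{\alpha}_{p,q}(\mathcal{L})$ via the $F_j(\mathcal{L})$, an $L_p$ Plancherel--Polya inequality for the boundedness of $\tau$, kernel localization plus a Schur/maximal argument over the space of homogeneous type $(\mathbf{M},\mu,dx)$ for $\sigma$, and the reconstruction formula for $\sigma\circ\tau=\mathrm{id}$) is exactly that argument transported to the sub-Riemannian setting, with the volume factors $\left|B^{\mu}(x^{j}_{k},2^{-j})\right|$ replacing the fixed power $2^{-jn}$. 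One small point worth recording: for your computation of $\|\tau(f)\|_{{\bf b}^{\alpha}_{p,q}}$ to reduce to $\sum_{k}|U^{j}_{k}|\,|(F_{j}(\mathcal{L})f)(x_{j,k})|^{p}\asymp\|F_{j}(\mathcal{L})f\|_{L_p}^{p}$, the weight in (\ref{sjkbes}) must be read as $\left(\left|B^{\mu}(x^{j}_{k},2^{-j})\right|^{1/p-1/2}|s^{j}_{k}|\right)^{p}$, which is the form consistent with the Riemannian Theorem \ref{BesovR} and is what your argument implicitly uses.
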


I am thankful to Hartmut F\"{u}hr and Gerard Kerkyacharian for stimulating discussions.

\end{document}